\let\pa\partial  
\let\na\nabla  
\let\eps\varepsilon  
\newcommand{\N}{{\mathbb N}}  
\newcommand{\R}{{\mathbb R}} 
\newcommand{\diver}{\operatorname{div}}
\newcommand{\T}{{\mathbb T}}
\newcommand{\F}{{\mathcal F}}
\newcommand{\DD}{{\mathcal D}}
\newcommand{\dt}{\triangle t}
\newtheorem{theorem}{Theorem}   
\newtheorem{lemma}[theorem]{Lemma}   
\newtheorem{proposition}[theorem]{Proposition}   
\newtheorem{remark}[theorem]{Remark}
\begin{document}  

\title[Energy-transport systems for optical lattices]{Energy-transport systems 
for optical lattices: derivation, analysis, simulation}

\author{Marcel Braukhoff}
\address{Mathematisches Institut, Universit\"at zu K\"oln, Weyertal 86-90, 50931 K\"oln,
Germany}
\email{mbraukho@math.uni-koeln.de} 
\author{Ansgar J\"ungel}
\address{Institute for Analysis and Scientific Computing, Vienna University of  
	Technology, Wiedner Hauptstra\ss e 8--10, 1040 Wien, Austria}
\email{juengel@tuwien.ac.at} 

\date{\today}

\thanks{The authors acknowledge partial support from   
the Austrian Science Fund (FWF), grants P27352, P30000, and W1245.} 

\begin{abstract}
Energy-transport equations for the transport of fermions in optical lattices
are formally derived from a Boltzmann transport equation with a periodic
lattice potential in the diffusive
limit. The limit model possesses a formal gradient-flow structure like in the
case of the energy-transport equations for semiconductors. At the zeroth-order
high temperature limit, the energy-transport equations reduce to the whole-space
logarithmic diffusion equation which has some unphysical properties. Therefore,
the first-order expansion is derived and analyzed. The existence of weak
solutions to the time-discretized system for the particle and energy densities
with periodic boundary conditions
is proved. The difficulties are the nonstandard degeneracy
and the quadratic gradient term. The main tool of the proof is a result on
the strong convergence of the gradients of the approximate solutions.
Numerical simulations in one space dimension
show that the particle density converges to a constant steady state if
the initial energy density is sufficiently large, otherwise the particle
density converges to a nonconstant steady state. 
\end{abstract}

\keywords{Energy-transport models, optical lattice, degenerate equations,
quadratic gradient, existence of weak solutions, finite differences.}  
 
\subjclass[2000]{35K59, 35K65, 35Q20, 82B40.}  

\maketitle


\section{Introduction}

An optical lattice is a spatially periodic structure that is formed by interfering
optical laser beams. The interference produces an optical standing wave that may trap
neutral atoms \cite{Blo05}. The lattice potential mimics the crystal lattice
in a solid, while the trapped atoms mimic the valance electrons in a solid state
crystal. In contrast to solid materials, it is easily
possible to adjust the geometry and depth of the potential of an optical lattice.
Another advantage is that the dynamics of the atoms, if cooled down to a few
nanokelvin, can be followed on the time scale of milliseconds.
Therefore, optical lattices are ideal systems to study physical phenomena that 
are difficult to observe in solid crystals. 
Moreover, they are promising candidates to realize
quantum information processors \cite{Jak04} and extremely precise atomic clocks
\cite{ADH15}.

The dynamics of ultracold fermionic clouds in an optical lattice can be modeled
by the Fermi-Hubbard model with a Hamiltonian that is a result of the
lattice potential created by interfering laser beams and short-ranged
collisions \cite{DGH15}. 
In the semi-classical picture, the interplay between diffusive and ballistic
regimes can be described by a Boltzmann transport equation \cite{GNZ09}, 
which is able to model qualitatively the observed cloud shapes \cite{SHR12}.

In this paper, we investigate
moment equations which are formally derived from a Boltzmann equation
in the diffusive regime. 
The motivation of our work is the observation that in the (relative)
high-temperature limit, the lowest-order diffusion
approximation of the Boltzmann equation leads to a logarithmic diffusion equation
\cite{MRR11}
which has nonphysical properties in the whole space
(for instance, it loses mass). Our aim is
to derive the next-order approximation, leading to energy-transport
equations for the particle and energy densities, and to analyze and 
simulate the resulting system of degenerate parabolic equations 
under periodic boundary conditions \cite{MRR11}.

The starting point is the (scaled) Boltzmann equation for the distribution function
$f(x,p,t)$,
\begin{equation}\label{1.be}
  \pa_t f + u\cdot\na_x f + \na_x V\cdot\na_p f = Q(f),
\end{equation}
where $x\in\R^d$ is the spatial variable, $p$ is the crystal momentum
defined on the $d$-dimensional torus $\T^d$ with unit measure, 
and $t>0$ is the time. The velocity $u$ is defined by $u(p)=\na_p\eps(p)$
with the energy $\eps(p)$,
$V(x,t)$ is the potential, and $Q(f)$ is the collision operator. 
Compared to the semiconductor Boltzmann
equation, there are two major differences.

First, the band energy $\eps(p)$ is given by the periodic dispersion relation
\begin{equation}\label{1.eps}
  \eps(p) = -2\eps_0\sum_{i=1}^d \cos(2\pi p_i), \quad p\in\T^d.
\end{equation}
The constant $\eps_0$ is a measure for the tunneling rate of a particle from one
lattice site to a neighboring one. In semiconductor physics, usually a
parabolic band structure is assumed, $\eps(p)=\frac12|p|^2$ \cite{Jue09}.
This formula also appears in kinetic gas theory as the (microscopic) kinetic
energy. The band energy in optical lattice is bounded, while it is unbounded
when $\eps(p)=\frac12|p|^2$. This has important consequences regarding the
integrability of the equilibrium distribution (see below).

Second, the potential $V$ is given by $V=-U_0n$, where $n=\int_{\T^d}fdp$ is the 
particle density and $U_0>0$ models the strength of the on-site interaction
between spin-up and spin-down components \cite{SHR12}. In semiconductor
physics, $V$ is the electric potential which is a given function
or determined self-consistently from the (scaled) Poisson equation \cite{Jue09}.
The definition $V=-U_0 n$ leads to unexpected ``degeneracies'' in the moment
equations, see the discussion below.

The collision operator is given as in \cite{SHR12} by the relaxation-time approximation
$$
  Q(f) = \frac{1}{\tau}(\F_f-f),
$$
where $\tau>0$ is the relaxation time
and $\F_f$ is determined by minimizing the free energy for fermions associated to
\eqref{1.be} under the constraints of mass and energy conservation (see Section 
\ref{sec.deriv} for details), leading to
$$
  \F_f(x,p,t) = \big(\eta + \exp(-\lambda_0(x,t)-\lambda_1(x,t)\eps(p))\big)^{-1},
  \quad x\in\R^d,\ p\in\T^d,\ t>0,
$$
where $(\lambda_0,\lambda_1)$ are the Lagrange multipliers resulting from the
mass and energy constraints. 
For $\eta=1$, we obtain the Fermi-Dirac distribution, while for
$\eta=0$, $\F_f$ equals the Maxwell-Boltzmann distribution.
We may consider $\F_f$ as a function of $(\lambda_0,\lambda_1)$ and write
$\F(\lambda_0,\lambda_1;p)=[\eta+\exp((-\lambda_0-\lambda_1\eps(p))]^{-1}$.

The variable $\lambda_1$ can be interpreted as the negative inverse 
(absolute) temperature,
while $\lambda_0$ is related to the so-called chemical potential \cite{Jue09}.
Since the band energy is bounded, the equilibrium $\F_f$ is integrable even when
$\lambda_1>0$, which means that the absolute temperature may be negative.
In fact, negative absolute temperatures can be realized in experiments with
cold atoms \cite{RMR10}. Negative temperatures occur in
equilibrated (quantum) systems that are characterized by an inverted population of
energy states. The thermodynamical implications of negative temperatures 
are discussed in \cite{Ram56}. 

In the following, we detail the main results of the paper.

\subsection*{Formal derivation and entropy structure}

Starting from the Boltzmann equation \eqref{1.be}, we derive 
formally moment equations in the limit
of large times and dominant collisions. More precisely, the particle density 
$n=\int_{\T^d}\F dp$ and energy density $E=\int_{\T^d}\F\eps dp$ solve
the energy-transport equations
\begin{equation}\label{1.et1}
  \pa_t n + \diver J_n = 0, \quad \pa_t E + \diver J_E - J_n\cdot\na V = 0
\end{equation}
for $x\in\R^d$, $t>0$, where the particle and energy current densities are
given by
\begin{equation}\label{1.et2}
  J_n = -\sum_{i=0}^1 D_{0i}\na\lambda_i - \lambda_1 D_{00}\na V,\quad
  J_E = -\sum_{i=0}^1 D_{1i}\na\lambda_i - \lambda_1 D_{10}\na V,
\end{equation}
and the diffusion coefficients depend nonlocally on $\F$ and hence on
$(\lambda_0,\lambda_1)$; see Proposition \ref{prop.deriv}. The structure of 
system \eqref{1.et1}-\eqref{1.et2} is similar to the semiconductor case \cite{JKP11} 
but the diffusion coefficients $D_{j}$ are different.
For $V=-U_0n$, the Joule heating term $J_n\cdot\na V$ contains the squared 
gradient $|\na n|^2$, while in the semiconductor case it contains $|\na V|^2$
which is generally smoother than $|\na n|^2$.

System \eqref{1.et1}-\eqref{1.et2} possesses a formal gradient-flow or entropy
structure. Indeed, the entropy $H$, defined in
Section \ref{sec.ent}, is nonincreasing in time,
$$
  \frac{dH}{dt} = -\int_{\R^d}\sum_{i,j=0}^1 \na\mu_i^\top L_{ij}\na\mu_j dx\le 0;
$$
see Proposition \ref{prop.ent}. Here, the functions 
$\mu_0=\lambda_0+\lambda_1 V$ and $\mu_1=\lambda_1$ are called the dual
entropy variables, and the coefficients $L_{ij}$ are defined in \eqref{2.L}.
In the dual entropy variables, the potential terms are eliminated, leading to
the ``symmetric'' problem
$$
  \pa_t\begin{pmatrix} n \\E \end{pmatrix} 
  = \diver\bigg(\begin{pmatrix} L_{00} & L_{01} \\ L_{10} & L_{11} \end{pmatrix}
  \na\begin{pmatrix} \mu_0 \\ \mu_1\end{pmatrix}\bigg),
$$
where the matrix $(L_{ij})$ is symmetric and positive definite.
This formal gradient-flow structure allows for the development
of an existence theory but only for uniformly positive definite diffusion matrices
\cite{DGJ97}. A general existence result (including electric potentials) 
is still missing. 

A further major difficulty comes from the fact that the
system possesses certain ``degeneracies'' in the mapping 
$(n,E)\mapsto \mu=(\mu_0,\mu_1)$ 
and the entropy production $-dH/dt$. For instance, the determinant of the
Jacobi matrix $\pa(n,E)/\pa\mu$ may vanish at certain points. 
Such a situation also occurs for the semiconductor energy-transport equations
but only at the {\em boundary} of the domain of definition (namely at $E=0$).
In the present situation, the degeneracy may occur at points in the {\em interior}
of the domain of definition. In view of these difficulties, an analysis of the general
energy-transport model \eqref{1.et1}-\eqref{1.et2} is currently out of reach.
This motivates our approach to introduce a simplified model.

\subsection*{Analysis of high-temperature energy-transport models}

We show the existence of weak solutions to a simplified
energy-transport model. It is argued in \cite{MRR11} that the temperature
is large (relative to the nanokelvin scale)
in the center of the atomic cloud for long times. Therefore, 
we simplify \eqref{1.et1}-\eqref{1.et2} by performing the high-temperature limit.
For high temperatures, the relaxation time may be approximated by
$\tau(n)=\tau_0/(n(1-n))$ \cite[Suppl.]{SHR12}. 
As $\theta=-1/\lambda_1$ can be interpreted as the 
temperature, the high-temperature limit corresponds to the limit $\lambda_1\to 0$.
Expanding $\F(\lambda_0,\lambda_1)$ around $(\lambda_0,0)$ up to zeroth order 
leads to the diffusion equation (see Section \ref{sec.high})
\begin{equation}\label{1.log}
  \pa_t n = \diver\bigg(\frac{\tau_0\na n}{n(1-\eta n)}\bigg) \quad\mbox{in }\R^d.
\end{equation}
In the case $\eta=0$, we obtain the logarithmic diffusion equation
$\pa_t n = \tau_0\Delta\log n$ which predicts a nonphysical behavior.
Indeed, in two space dimensions, it can be shown that
the particle number is not conserved
and the unique smooth solution exists for finite time only; see, e.g.,
\cite{DaDe99,Vaz06}. We expect a similar behavior when $\eta>0$.
This motivates us to compute the next-order expansion. It turns out
that at first order and with $V=-U_0n$,
$(n,E)$ is solving the (rescaled) energy-transport equations
\begin{align}\label{1.n}
  \pa_t n &= \diver\bigg(\frac{W\na n}{n(1-\eta n)}\bigg), \\
  \pa_t W &= \frac{2d-1}{2d}\diver\bigg(\frac{\na W}{n(1-\eta n)}\bigg)
  - U\frac{W|\na n|^2}{n(1-\eta n)}. \label{1.w}
\end{align}
where $U=U_0/(2d\eps_0^2)$ and $W=1-UE$ is the ``reverted'' energy. 
The case $W=0$ corresponds to the maximal energy $E=1/U_0$.
Taking into account the periodic lattice structure, we solve \eqref{1.n}-\eqref{1.w} 
on the torus $\T^d$, together with the initial conditions
$n(0)=n^0$, $W(0)=W^0$ in $\T^d$.

The structure of the diffusion equation \eqref{1.n} is similar to \eqref{1.log},
but the diffusion coefficient contains $W$ as a factor, adding a degeneracy to
the singular logarithmic diffusion equation. It is an open problem whether 
this factor removes the unphysical behavior of the solution to \eqref{1.log} in 
$\R^d$. We avoid this problem by solving \eqref{1.n}-\eqref{1.w} in a bounded domain
and by looking for strictly positive particle densities. Is is another open 
problem to prove the existence of solutions to \eqref{1.n}-\eqref{1.w} 
in the whole space.

Because of the squared gradient term in \eqref{1.w}, the energy $E$ (or $W$)
is not conserved but the total energy $W_{\rm tot}=W-(U/2)n^2$.
In fact, in terms of $W_{\rm tot}$, the squared gradient term is eliminated,
\begin{equation}\label{1.wtot}
  \pa_t W_{\rm tot} = \diver\bigg(\frac{\na W}{2n(1-\eta n)}
	+ \frac{UW}{1-\eta n}\na n\bigg).
\end{equation}
Unfortunately, this formulation does not help for the analysis since the
treatment of $\pa_t(n^2)=2n\pa_t n$ is delicate as $\pa_t n$ lies in the dual space 
$H^1(\T^d)'$ but $n$ is generally not an element of $H^1(\T^d)$ because of
the degeneracy (we have only $W^{1/2}\na n\in L^2(\T^d)$).

The analysis of system \eqref{1.n}-\eqref{1.w} is very challenging since
the first equation is degenerate in $W$, and the second equation contains
a quadratic gradient term. In the literature, there exist existence results
for degenerate equations with quadratic gradient terms \cite{DGLS06,GiMa08}, but
the degeneracy is of porous-medium type. A more complex degeneracy was
investigated in \cite{Cro12}. In our case, the degeneracy comes from another
variable, which is much more delicate to analyze. 

Related problems appear in semiconductor energy-transport theory,
but only partial results have been obtained so far. Let us review
these results.
The existence of stationary solutions to \eqref{1.et1} with the current densities
$$
  J_n = -\na(n\theta) + n\na V, \quad J_E = -\kappa_0\na \theta+\frac52\theta J_n,
  \quad E = \frac32 n\theta, 
$$ 
close to the constant equilibrium has been shown in \cite{AlRo17}.
The idea is that in such a situation, the temperature $\theta$
is strictly positive which removes the degeneracy in the term $\na(n\theta)$.
The parabolic system was investigated 
in \cite{JPR13,LLS16}, and the global existence of weak solutions was shown
without any smallness condition but for a simplifed energy equation. 
Again, the idea was to
prove a uniform positivity bound for the temperature, which removes the
degeneracy. A more general result (but without electric potential) was achieved
in \cite{ZaJu15} for the system
$$
  \pa_t n = \Delta (\theta^\alpha n), \quad 
  \pa_t(\theta n) = \Delta(\theta^{\alpha+1}n) 
  + \frac{n}{\tau}(1-\theta)
$$
in a bounded domain, where $0<\alpha<1$.
The global existence of weak solutions to the corresponding
initial-boundary-value problem was proved. Again, the idea is a positivity bound
for $\theta$ but this bound required a nontrivial cut-off procedure and
several entropy estimates.
 
In this paper, we make a step forward in the analysis of nonlinear parabolic
systems with nonstandard degeneracies by solving \eqref{1.n}-\eqref{1.w} without
any positive lower bound for $W$. Since $W$ may vanish, we can expect a gradient
estimate for $n$ only on $\{W>0\}$. Although the quadratic gradient term also
possesses $W$ as a factor, the treatment of this term is highly delicate,
because of low time regularity. 
Therefore, we present a result only for a time-discrete version of 
\eqref{1.n}-\eqref{1.w}, namely for its implicit Euler approximation
\begin{align}
  \frac{1}{\dt}(n^k-n^{k-1}) &= \diver\bigg(\frac{W^k\na n^k}{n(1-\eta n^k)}\bigg), 
	\label{1.appn} \\
	\frac{1}{\dt}(W^k-W^{k-1}) &= \frac{2d-1}{2d}\diver\bigg(\frac{\na W^k}{n^k
	(1-\eta n^k)}\bigg) - U\frac{W^k|\na n^k|^2}{n^k(1-\eta n^k)} \label{1.appw}
\end{align}
for $x\in\T^d$, where $\dt>0$ and $(n^{k-1},W^{k-1})$ are given functions.
We show the existence of a weak solution $(n^k,W^k)$ satisfying $n^k\ge 0$,
$W^k\ge 0$ and $W^kn^k$, $W^k\in H^1(\T^d)$; see Theorem \ref{thm.ex}.
In one space dimension and under a smallness assumption on the variance
of $W^{k-1}$ and $n^{k-1}$, the strict positivity of $W^k$ can be proved;
see Theorem \ref{thm.ex2}.

The existence proof is based on the solution of a regularized and truncated
problem by means of the Leray-Schauder fixed-point theorem. Standard elliptic
estimates provide bounds uniform in the approximation parameters. The key step
is the proof of the strong convergence of the gradient of the particle density.
For this, we show a general result for degenerate elliptic problems; see
Proposition \ref{prop.conv}. This result seems to be new. Standard results in
the literature need the ellipticity of the differential operator \cite{BoMu92}.
Unfortunately, we are not able to perform the limit $\triangle t\to 0$ since
some estimates in the proof of Proposition \ref{prop.conv} are not uniform 
in $\triangle t$; also see Remark \ref{rem.comm} for a discussion.

\subsection*{Numerical simulations}

The time-discrete system \eqref{1.appn}-\eqref{1.appw} is discretized by
finite differences in one space dimension and solved in an semi-implicit way. 
The large-time behavior exhibits an interesting phenomenon. If the initial 
energy $W^0$ is constant and sufficiently large, the solution $(n(t),W(t))$
converges to a constant steady state. However, if the constant $W^0$ is too
small, the stationary particle density is nonconstant. In both cases, the
time decay is exponential fast, but the decay rate becomes smaller for smaller
constants $W^0$ since the diffusion coefficient in \eqref{1.n} is smaller too.


\bigskip\noindent
The paper is organized as follows. Section \ref{sec.deriv} is devoted to the
formal derivation of the general energy-transport model and its entropy
structure, similar to the semiconductor case \cite{BeDe96}.
The high-temperature expansion is performed in Section \ref{sec.high},
leading to the energy-transport system \eqref{1.n}-\eqref{1.w}.
The strong convergence of the gradients
is shown in Section \ref{sec.conv}. In Section \ref{sec.ex} the existence
result is stated and proved. The numerical simulations are presented in
Section \ref{sec.num}, and the Appendix is concerned with the calculation
of some integrals involving the velocity $u(p)$ and energy $\eps(p)$.


\section{Formal derivation and entropy structure}\label{sec.deriv}

\subsection{Derivation from a Boltzmann equation}

We consider the following semiclassical Boltzmann transport equation 
for the distribution function $f(x,p,t)$ in the diffusive scaling:
\begin{equation}\label{2.be}
  \alpha\pa_t f_\alpha + u\cdot\na_x f_\alpha + \na V_\alpha\cdot
	\na_p f_\alpha = \frac{1}{\alpha}Q_\alpha(f_\alpha), \quad 
	(x,p)\in\R^{d}\times\T^d,\ t>0,
\end{equation}
where $\alpha>0$ is the Knudsen number \cite{BeDe96},
$(x,p)$ are the phase-space variables (space and crystal momentum), 
and $t>0$ is the time. We recall that the velocity equals
$u(p)=\na_p \eps(p)$, where the energy $\eps(p)$ is given by \eqref{1.eps}.
The potential $V_\alpha$ is defined by $V_\alpha=-U_0 n_\alpha$.
In the physical literature \cite{SHR12}, the
collision operator $Q_\alpha$ is given by the relaxation-time approximation
$$
  Q_\alpha(f) = \frac{1}{\tau_\alpha}(\F_f-f),
$$
where the function $\F_f$ is determined by maximizing the free energy \eqref{2.h}
associated to \eqref{2.be} under the constraints
\begin{equation}\label{2.mec}
  \int_{\T^d}(\F_f-f)dp = 0, \quad \int_{\T^d}(\F_f-f)\eps(p)dp = 0,
\end{equation}
which express mass and energy conservation during scattering events.
The solution of this problem is given by 
$$
  \F_f(x,p,t) = \frac{1}{\eta + \exp(-\lambda_0(x,t)-\lambda_1(x,t)\eps(p))},
$$
where $\lambda_0$ and $\lambda_1$ are the Lagrange multipliers and $\eta\ge 0$
is a parameter which may take the values $\eta=0$ (Maxwell-Boltzmann statistics)
or $\eta=1$ (Fermi-Dirac statistics). The relaxation time $\tau_\alpha\ge 0$
generally depends on the particle density but at this point we do not need
to specifiy the dependence. 

We show the following result.

\begin{proposition}[Derivation]\label{prop.deriv}
Let $f_\alpha$ be a (smooth) solution to the Boltzmann equation \eqref{2.be}.
We assume that the formal limits $f=\lim_{\alpha\to 0}f_\alpha$,
$g=\lim_{\alpha\to 0}(f_\alpha-\F_{f_\alpha})/\alpha$, and
$\tau=\lim_{\alpha\to 0}\tau_\alpha$ exist. Then the 
particle and energy densities
$$
  n = n[\F_f] = \int_{\T^d}\F_f dp, \quad E = E[\F_f] = \int_{\T^d}\F_f\eps(p)dp
$$
are solutions to \eqref{1.et1}-\eqref{1.et2},
and the diffusion coefficients $D_{ij}=(D_{ij}^{k\ell})\in\R^{d\times d}$
are defined by 
$$
  D_{ij}^{k\ell} = \tau\int_{\T^d}u_k u_\ell\F_f(1-\eta\F_f)\eps(p)^{i+j}dp, \quad
	i,j=0,1,\ k,\ell=1,\ldots,d.
$$
\end{proposition}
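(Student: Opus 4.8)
The plan is a formal Chapman--Enskog (moment) expansion in the Knudsen number $\alpha$, along the lines of the semiconductor derivation \cite{BeDe96}; all limits are taken for granted by hypothesis. First I would read off from \eqref{2.be} that $\F_{f_\alpha}-f_\alpha=\tau_\alpha Q_\alpha(f_\alpha)=\alpha\tau_\alpha(\alpha\pa_t f_\alpha+u\cdot\na_x f_\alpha+\na V_\alpha\cdot\na_p f_\alpha)$, so that $f_\alpha-\F_{f_\alpha}=O(\alpha)$ and the limit $f=\F_f$ is a local equilibrium. Writing $g_\alpha=(f_\alpha-\F_{f_\alpha})/\alpha$, the right-hand side of \eqref{2.be} equals $\alpha^{-1}Q_\alpha(f_\alpha)=-\tau_\alpha^{-1}g_\alpha$, so that letting $\alpha\to 0$ (the term $\alpha\pa_t f_\alpha$ drops out) gives
$$ g = -\tau\big(u\cdot\na_x\F_f + \na V\cdot\na_p\F_f\big). $$

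Next I would integrate \eqref{2.be} over $p\in\T^d$, and then over $p$ weighted by $\eps(p)$. In both cases the collision terms vanish by the conservation constraints \eqref{2.mec}. The remaining reductions rely on three elementary facts: (i) $\int_{\T^d}\na_p f_\alpha\,dp=0$ and, after an integration by parts, $\int_{\T^d}\eps\,\na_p f_\alpha\,dp=-\int_{\T^d}u\,f_\alpha\,dp$, both by periodicity of $f_\alpha$ and $\eps$ on $\T^d$; (ii) $\int_{\T^d}u\,\F_{f_\alpha}\,dp=0$ and $\int_{\T^d}u\,\eps\,\F_{f_\alpha}\,dp=0$, because $\F_{f_\alpha}$ depends on $p$ only through $\eps(p)$ while $u=\na_p\eps$, so the integrands are $p$-gradients of periodic functions (this is where the structure of the band \eqref{1.eps} enters; compare the Appendix); (iii) by \eqref{2.mec}, $\int_{\T^d}\F_{f_\alpha}\,dp=n_\alpha$ and $\int_{\T^d}\F_{f_\alpha}\,\eps\,dp=E_\alpha$, which gives the claimed limit densities $n=\int_{\T^d}\F_f\,dp$ and $E=\int_{\T^d}\F_f\,\eps\,dp$. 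Using (ii) and $f_\alpha=\F_{f_\alpha}+\alpha g_\alpha$, the fluxes $\int_{\T^d}u\,f_\alpha\,dp$ and $\int_{\T^d}u\,\eps\,f_\alpha\,dp$ are $O(\alpha)$; dividing the two moment identities by $\alpha$ and letting $\alpha\to0$ then yields $\pa_t n+\diver J_n=0$ and $\pa_t E+\diver J_E-\na V\cdot J_n=0$ with $J_n=\int_{\T^d}u\,g\,dp$ and $J_E=\int_{\T^d}u\,\eps\,g\,dp$, the term $\na V\cdot J_n$ arising precisely from the force term in the energy moment treated via (i).

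It then remains to substitute the expression for $g$. Differentiating $\F=[\eta+\exp(-\lambda_0-\lambda_1\eps)]^{-1}$ gives the identities $\pa_{\lambda_i}\F=\eps^i\,\F(1-\eta\F)$ for $i=0,1$ and $\na_p\F=\lambda_1\,\F(1-\eta\F)\,u$, whence
$$ g = -\tau\,\F(1-\eta\F)\,u\cdot\big(\na\lambda_0+\eps\na\lambda_1+\lambda_1\na V\big). $$
Inserting this into $J_n^k=\int_{\T^d}u_k\,g\,dp$ and $J_E^k=\int_{\T^d}u_k\,\eps\,g\,dp$ and pulling the ($p$-independent) $x$-gradients out of the $\T^d$-integrals yields $J_n^k=-\sum_\ell D_{00}^{k\ell}\pa_\ell\lambda_0-\sum_\ell D_{01}^{k\ell}\pa_\ell\lambda_1-\lambda_1\sum_\ell D_{00}^{k\ell}\pa_\ell V$ and the same formula for $J_E^k$ with the first lower index $0$ replaced by $1$, where $D_{ij}^{k\ell}=\tau\int_{\T^d}u_k u_\ell\,\F(1-\eta\F)\,\eps^{i+j}\,dp$. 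This is exactly \eqref{1.et2}, which completes the derivation.

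Since the individual computations are routine, the place I expect to need the most care is the second step: the two vanishing identities $\int_{\T^d}u\,\F_{f_\alpha}\,dp=\int_{\T^d}u\,\eps\,\F_{f_\alpha}\,dp=0$ — which express that local equilibria carry no particle or energy current and are the reason the fluxes are $O(\alpha)$ — together with the correct accounting of the force term in the energy moment that produces the Joule heating $-J_n\cdot\na V$. Beyond these, the argument is entirely formal: no compactness or regularity estimate is needed once the limits $f$, $g$, and $\tau$ are assumed to exist.
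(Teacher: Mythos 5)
Your proposal is correct and follows essentially the same Chapman--Enskog route as the paper's proof: take the zeroth and first ($\eps$-weighted) moments of \eqref{2.be}, use \eqref{2.mec} to kill the collision terms and the vanishing of the equilibrium fluxes to see that the currents are $O(\alpha)$, identify $g$ from the $O(1)$ balance of the expanded kinetic equation, and substitute the explicit derivatives of $\F$ to read off $D_{ij}^{k\ell}$. The only cosmetic difference is that you justify $\int_{\T^d}u\,\eps^j\F_{f_\alpha}\,dp=0$ by writing the integrand as a $p$-gradient of a periodic function, whereas the paper invokes the oddness of $p\mapsto u(p)\eps(p)^j\F_{f_\alpha}(p)$; both arguments are valid.
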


The proof of the proposition is similar to those of Propositions 1 and 2 in
\cite{JKP11}. For the convenience of the reader, we present the (short)
proof.

\begin{proof}
To derive the balance equations, we multiply the
Boltzmann equation \eqref{2.be} by $1$ and $\eps$, respectively, and integrate over
$\T^d$:
\begin{align}
  & \pa_t n[f_\alpha] + \frac{1}{\alpha}\mathrm{div}_x\int_{\T^d}uf_\alpha dp
 = 0, \label{2.m1} \\
	& \pa_t E[f_\alpha] + \frac{1}{\alpha}\mathrm{div}_x\int_{\T^d}
	\eps u f_\alpha dp - \frac{1}{\alpha}\na_x V_\alpha\cdot\int_{\T^d}uf_\alpha dp = 0.
	\label{2.m2}
\end{align}
The integrals involving the collision operator vanish in view of mass and
energy conservation; see \eqref{2.mec}.
We have integrated by parts in the last integral on the left-hand side of
\eqref{2.m2}. Next, we insert the Chapman-Enskog expansion 
$f_\alpha=\F_{f_\alpha}+\alpha g_\alpha$ (which in fact defines $g_\alpha$)
in \eqref{2.m1}-\eqref{2.m2} and observe that the function
$p\mapsto u(p)\eps(p)^j\F_{f_\alpha}(p)$ is odd for any $j\in\N_0$ such that
its integral over $\T^d$ vanishes. This leads to
\begin{align*}
  & \pa_t n[\F_{f_\alpha}] + \alpha\pa_t n[g_\alpha]
	+ \mathrm{div}_x\int_{\T^d}ug_\alpha dp = 0, \\
	& \pa_t E[\F_{f_\alpha}] + \alpha\pa_t E[g_\alpha]
	+  \mathrm{div}_x\int_{\T^d}u\eps g_\alpha dp 
	- \na_x V\cdot\int_{\T^d}ug_\alpha dp = 0.
\end{align*}
Passing to the formal limit $\alpha\to 0$ gives the balance equations 
\eqref{1.et1} with 
\begin{equation}\label{2.auxJ}
  J_n = \int_{\T^d} ugdp, \quad J_E = \int_{\T^d} u\eps gdp.
\end{equation}

To specify the current densities, we insert the Chapman-Enskog expansion
in \eqref{2.be},
$$
  \alpha\pa_t(\F_{f_\alpha}+\alpha g_\alpha) 
	+ u\cdot\na_x(\F_{f_\alpha}+\alpha g_\alpha)
	+ \na_x V\cdot\na_p(\F_{f_\alpha}+\alpha g_\alpha) = -\frac{g_\alpha}{\tau_\alpha},
$$
and perform the formal limit $\alpha\to 0$,
\begin{equation}\label{2.auxg}
  u\cdot\na_x\F_f + \na_x V\cdot\na_p \F_f = -\frac{g}{\tau}.
\end{equation}
A straightforward computation shows that
$$
  \na_x\F_f = \F_f(1-\eta\F_f)(\na_x\lambda_0 + \eps\na_x\lambda_1), \quad
	\na_p\F_f = \F_f(1-\eta\F_f)u\lambda_1,
$$
and inserting this into \eqref{2.auxg} gives an explicit expression for $g$:
$$
  g = -\tau \F_f(1-\eta\F_f)\big(u\cdot\na_x\lambda_0 + \eps u\cdot\na_x\lambda_1
	+ \lambda_1\na_x V\cdot u\big).
$$
Therefore, the current densities \eqref{2.auxJ} lead to \eqref{1.et2}.
This finishes the proof.
\end{proof}

In the following we write $\F_f=\F(\lambda)$, where
\begin{equation}\label{2.F}
  \F(\lambda) = \frac{1}{\eta + \exp(-\lambda_0-\lambda_1\eps(p))}, \quad
	\lambda=(\lambda_0,\lambda_1)\in\R^2,\ p\in\T^d.
\end{equation}

\begin{proposition}[Diffusion matrix]\label{prop.D}
The diffusion matrix $\DD=(D_{ij})\in\R^{2d\times 2d}$ is symmetric and
positive definite.
\end{proposition}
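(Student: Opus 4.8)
The plan is to exploit the integral representation of the diffusion coefficients, namely
$$
  D_{ij}^{k\ell} = \tau\int_{\T^d}u_k u_\ell\,\F(\lambda)(1-\eta\F(\lambda))\,\eps(p)^{i+j}\,dp,
$$
and to recognize $\DD$ as a Gram-type matrix with respect to a positive weight. First I would fix the scalar weight $w(p) := \tau\,\F(\lambda)(1-\eta\F(\lambda))$ and observe that $w(p)>0$ on $\T^d$: indeed $\tau>0$, and since $\F(\lambda)=(\eta+\exp(-\lambda_0-\lambda_1\eps(p)))^{-1}$ lies strictly between $0$ and $1/\eta$ for $\eta>0$ (and $\F>0$ for $\eta=0$, in which case $1-\eta\F\equiv 1$), the factor $\F(1-\eta\F)$ is strictly positive. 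Thus $w\,dp$ is a genuine (finite, since $\T^d$ is compact and $w$ is continuous) positive measure on $\T^d$.

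Next I would read off the symmetry. Writing a generic vector in $\R^{2d}$ as $\xi=(\xi^{(0)},\xi^{(1)})$ with $\xi^{(i)}\in\R^d$, a direct computation gives
$$
  \xi^\top\DD\,\xi = \sum_{i,j=0}^1\sum_{k,\ell=1}^d \xi^{(i)}_k D_{ij}^{k\ell}\xi^{(j)}_\ell
  = \int_{\T^d} w(p)\,\bigg|\sum_{i=0}^1 \eps(p)^i\big(\xi^{(i)}\cdot u(p)\big)\bigg|^2\,dp \ge 0,
$$
because the double sum over $i,j$ together with the $\eps^{i+j}$ factor collapses the integrand into the square of $\sum_i \eps^i (\xi^{(i)}\cdot u)$. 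This already shows $\DD$ is symmetric ($D_{ij}^{k\ell}=D_{ji}^{\ell k}$ is visible from the formula) and positive semidefinite.

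The only real work is strict positivity, i.e.\ that the quadratic form vanishes only for $\xi=0$. Suppose $\xi^\top\DD\,\xi=0$; since $w>0$ the integrand must vanish a.e., hence (by continuity) everywhere, giving the pointwise identity $(\xi^{(0)}\cdot u(p)) + \eps(p)\,(\xi^{(1)}\cdot u(p)) = 0$ for all $p\in\T^d$. Now I would use the explicit band structure: from \eqref{1.eps}, $u_k(p)=\pa_{p_k}\eps(p)=4\pi\eps_0\sin(2\pi p_k)$, so the identity reads
$$
  4\pi\eps_0\sum_{k=1}^d\Big(\xi^{(0)}_k + \eps(p)\,\xi^{(1)}_k\Big)\sin(2\pi p_k) = 0
  \qquad\text{for all } p\in\T^d.
$$
Evaluating along suitable coordinate lines finishes it: fixing all $p_j$ ($j\neq k$) and varying $p_k$, the functions $\sin(2\pi p_k)$, $\cos(2\pi p_k)\sin(2\pi p_k)$ (coming from the $\eps(p)$ factor, which contributes a $-2\eps_0\cos(2\pi p_k)$ term in the $k$-th slot and constants in the others) are linearly independent, forcing $\xi^{(0)}_k=\xi^{(1)}_k=0$ for each $k$; hence $\xi=0$. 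I expect this last linear-independence argument to be the main (though still elementary) obstacle — one must be slightly careful that the coefficients multiplying the independent trigonometric functions are themselves constants in $p_k$, which is where the product structure of $\eps(p)$ as a sum over coordinates is used. Alternatively, and perhaps more cleanly, one argues that $\{u_k\}_{k=1}^d \cup \{\eps\,u_k\}_{k=1}^d$ is a linearly independent family in $L^2(\T^d, w\,dp)$, which is exactly the statement that the Gram matrix $\DD/1$ (up to the $\tau$ normalization already inside $w$) is nonsingular; this reduces everything to the trigonometric computation above. This mirrors the corresponding argument in the semiconductor case \cite{JKP11}, the difference being only the concrete form of $u$ and $\eps$.
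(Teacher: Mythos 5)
Your proposal is correct, and for the symmetry and positive \emph{semi}definiteness it follows exactly the paper's route: both write the quadratic form as a weighted integral of a square,
$$
  z^\top\DD z=\tau\int_{\T^d}\F(1-\eta\F)\Big(\textstyle\sum_{i=0}^1\eps^i\,(\xi^{(i)}\cdot u)\Big)^2dp\ \ge 0 ,
$$
i.e.\ they read $\DD$ as a Gram matrix of the family $\{u_k,\eps u_k\}$ in $L^2(\T^d,\F(1-\eta\F)\,dp)$. (The paper additionally rewrites this as $\int\F(1-\eta\F)\sum_i|u_i(w_i+\eps y_i)|^2dp$, which implicitly uses that the cross terms $i\ne j$ vanish by oddness of $u_iu_j$ in $p_i$; you keep the full square, which is equally valid and arguably cleaner.) Where you genuinely go beyond the paper is the strictness: the paper's displayed chain stops at ``$\ge 0$'' and never argues that the form vanishes only for $z=0$, even though the proposition asserts positive \emph{definiteness}. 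Your additional step — deducing from $w>0$ the pointwise identity $\sum_k(\xi^{(0)}_k+\eps(p)\xi^{(1)}_k)\sin(2\pi p_k)=0$ and then restricting to coordinate lines (e.g.\ $p_j=0$ for $j\ne k$, so that $\eps(p)=-2\eps_0(\cos(2\pi p_k)+(d-1))$ and the linear independence of $\sin(2\pi p_k)$ and $\cos(2\pi p_k)\sin(2\pi p_k)$ forces $\xi^{(1)}_k=\xi^{(0)}_k=0$) — is exactly the missing nondegeneracy argument, and it is correct (it uses $\eps_0\ne 0$ and the strict positivity of $\F(1-\eta\F)$, both available here). So your proof actually establishes the full statement, whereas the paper defers the strict part to the analogous computation in \cite{JKP11}; the only caution is that your linear-independence step genuinely relies on the explicit band structure \eqref{1.eps}, so it is less ``structural'' but more complete.
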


\begin{proof}
The proof is similar to Proposition 3 in \cite{JKP11}. Let $z=(w,y)\in\R^{2d}$
with $w$, $y\in\R^d$. Then
\begin{align*}
  z^\top\DD z 
	&= w^\top D_{00}w + 2w^\top D_{01}y + y^\top D_{11}y \\
	&= \int_{\T^d}\F(1-\eta\F)\sum_{i=1}^d(u_iw_i+\eps u_i y_i)\sum_{j=1}^d
	(u_jw_j+\eps u_j y_j)dp \\
	&= \int_{\T^d}\F(1-\eta\F)\sum_{i=1}^d\big|u_i(w_i+\eps y_i)\big|^2 dp \ge 0.
\end{align*}
Since $D_{ij}^{k\ell}$ is symmetric in $(i,j)$ and $(k,\ell)$, the symmetry
of $\DD$ is clear.
\end{proof}


\subsection{Entropy structure}\label{sec.ent}

The entropy structure of \eqref{1.et1}-\eqref{1.et2} follows from the abstract
framework presented in \cite{JKP11}. In the following, we make this
framework explicit. First, we introduce the entropy
\begin{align*}
  H(t) &= \int_{\R^d} h(\lambda)dx, \quad\mbox{where} \\
	h(\lambda) &= \int_{\T^d}\big(
	\F\log\F + \eta^{-1}(1-\eta\F)\log(1-\eta\F)\big)dp.
\end{align*}
The entropy density $h$ can be reformulated as
\begin{align}
  h(\lambda) &= \int_{\T^d}\bigg(\F\log\frac{\F}{1-\eta\F} 
	- \frac{1}{\eta}\log\frac{1}{1-\eta\F}\bigg)dp \nonumber \\
	&= \int_{\T^d}\big(\F(\lambda_0+\lambda_1\eps) - \frac{1}{\eta}\log
	(1+\eta e^{\lambda_0+\lambda_1\eps})\big)dp \nonumber \\
	&= n\lambda_0 + E\lambda_1  - \frac{1}{\eta}\int_{\T^d}\log
	(1+\eta e^{\lambda_0+\lambda_1\eps})dp. \label{2.h}
\end{align}
The following result shows that the entropy is nonincreasing in time.

\begin{proposition}[Entropy structure]\label{prop.ent}
It holds that
\begin{equation}\label{2.dHdt}
  \frac{dH}{dt} = -\int_{\R^d}\sum_{i,j=0}^1\na\mu_i^\top L_{ij}\na\mu_j dx \le 0,
\end{equation}
where $\mu_0=\lambda_0+\lambda_1 V$ and $\mu_1=\lambda_1$ are the so-called 
dual entropy variables and 
\begin{equation}\label{2.L}
  L_{00} = D_{00}, \quad L_{01} = L_{10} = D_{01}-D_{00}V, \quad
	L_{11} = D_{11} - 2D_{01}V + D_{00}V^2.
\end{equation}
\end{proposition}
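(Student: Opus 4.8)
The plan is to differentiate $H$ in time and exploit the algebraic identity $dh=\lambda_0\,dn+\lambda_1\,dE$ implicit in the reformulation \eqref{2.h}. Writing $h(\lambda)=n\lambda_0+E\lambda_1-\Phi(\lambda)$ with $\Phi(\lambda)=\eta^{-1}\int_{\T^d}\log(1+\eta e^{\lambda_0+\lambda_1\eps})dp$, a direct computation gives $\pa\Phi/\pa\lambda_0=\int_{\T^d}\F\,dp=n$ and $\pa\Phi/\pa\lambda_1=\int_{\T^d}\F\eps\,dp=E$, so that $d\Phi=n\,d\lambda_0+E\,d\lambda_1$ and hence
\[
  dh=d(n\lambda_0+E\lambda_1)-d\Phi=\lambda_0\,dn+\lambda_1\,dE .
\]
Using this together with the balance equations \eqref{1.et1}, differentiation of $H(t)=\int_{\R^d}h\,dx$ yields
\[
  \frac{dH}{dt}=\int_{\R^d}(\lambda_0\pa_t n+\lambda_1\pa_t E)\,dx
  =\int_{\R^d}\big(-\lambda_0\diver J_n-\lambda_1\diver J_E+\lambda_1 J_n\cdot\na V\big)\,dx .
\]

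Next I would integrate by parts over $\R^d$, assuming enough spatial decay that no boundary terms arise (the computation is formal), to reach the integrand $\na\lambda_0\cdot J_n+\na\lambda_1\cdot J_E+\lambda_1 J_n\cdot\na V$. The crucial step is the passage to the dual entropy variables $\mu_0=\lambda_0+\lambda_1 V$, $\mu_1=\lambda_1$: since $\na\lambda_0=\na\mu_0-V\na\mu_1-\mu_1\na V$ and $\na\lambda_1=\na\mu_1$, the two $\na V$-contributions cancel and the integrand collapses to $\na\mu_0\cdot J_n+\na\mu_1\cdot(J_E-VJ_n)$. Inserting the current densities \eqref{1.et2} and expressing $\na\lambda_i$ through $\na\mu_i$ and $\na V$, the $\na V$-terms inside $J_n$ and $J_E$ cancel once more, and one reads off $J_n=-L_{00}\na\mu_0-L_{01}\na\mu_1$ and $J_E-VJ_n=-L_{10}\na\mu_0-L_{11}\na\mu_1$ with exactly the coefficients \eqref{2.L}. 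This gives the identity $dH/dt=-\int_{\R^d}\sum_{i,j=0}^1\na\mu_i^\top L_{ij}\na\mu_j\,dx$.

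For the inequality I would note that $(L_{ij})=S^\top\DD S$ for the invertible block matrix $S=\begin{pmatrix} I & -VI \\ 0 & I \end{pmatrix}$, so by Proposition \ref{prop.D} the matrix $(L_{ij})\in\R^{2d\times 2d}$ is symmetric and positive definite; hence $\sum_{i,j=0}^1\na\mu_i^\top L_{ij}\na\mu_j=\zeta^\top(L_{ij})\zeta\ge 0$ pointwise with $\zeta=(\na\mu_0,\na\mu_1)\in\R^{2d}$, and $dH/dt\le 0$ follows. As a by-product this also re-derives the symmetry and positive definiteness of $(L_{ij})$.

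The routine part is the chain-rule computation of $dH/dt$. The part that needs genuine care is the bookkeeping of the potential terms: the Joule heating term $\lambda_1 J_n\cdot\na V$ in the energy equation, combined with the specific choice $\mu_0=\lambda_0+\lambda_1 V$, is precisely what makes all $\na V$-terms drop out and fuses the pair $(\DD,V)$ into the single matrix $(L_{ij})$; keeping the signs and the factor $V^2$ in $L_{11}$ straight is where one must be attentive. The only analytic point — integration by parts with no boundary contributions on $\R^d$ — is not an obstacle here, since the proposition is stated at a formal level.
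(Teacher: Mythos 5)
Your proposal is correct and follows essentially the same route as the paper: establish $\pa_t h=\lambda_0\pa_t n+\lambda_1\pa_t E$ from the reformulation \eqref{2.h}, integrate by parts using \eqref{1.et1}--\eqref{1.et2}, pass to the dual entropy variables so that the $\na V$-terms cancel, and deduce the sign from the positive definiteness of $\DD$. Your explicit Legendre-type computation $d\Phi=n\,d\lambda_0+E\,d\lambda_1$ and the congruence $(L_{ij})=S^\top\DD S$ are just cleaner ways of writing the steps the paper carries out (or alludes to with ``a computation shows'').
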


\begin{proof}
Identity \eqref{2.h} implies that
$$
  \frac{\pa h}{\pa\lambda_i} 
	= \frac{\pa n}{\pa\lambda_i}\lambda_0 + \frac{\pa E}{\pa\lambda_i}\lambda_1,
	\quad i=0,1,
$$
and consequently,
\begin{align*}
  \pa_t h(\lambda) 
	&= \frac{\pa h}{\pa\lambda_0}\pa_t\lambda_0
	+ \frac{\pa h}{\pa\lambda_1}\pa_t\lambda_1  \\
	&= \lambda_0\bigg(\frac{\pa n}{\pa\lambda_0}\pa_t\lambda_0
	+ \frac{\pa n}{\pa\lambda_1}\pa_t\lambda_1\bigg)
	+ \lambda_1\bigg(\frac{\pa E}{\pa\lambda_0}\pa_t\lambda_0 
	+ \frac{\pa E}{\pa\lambda_1}\pa_t\lambda_1\bigg) \\
	&= \lambda_0\pa_t n+\lambda_1\pa_t E.
\end{align*}
Therefore, using \eqref{1.et1}-\eqref{1.et2} and integration by parts,
\begin{align*}
  \frac{dH}{dt} &= \int_{\R^d}\pa_t h(\lambda) dx
	= \int_{\R^d}\big(J_n\cdot\na\lambda_0 + J_E\cdot\na\lambda_1
	+ \na V\cdot J_n \lambda_1\big)dx \\
	&= -\int_{\R^d}\big(D_{00}|\na\mu_0|^2 + 2(D_{01}-D_{00}V)\na\mu_0\cdot\na\mu_1 \\
	&\phantom{xx}{}+ (D_{11}-2D_{01}V+D_{00}V^2)|\na\mu_1|^2\big)dx,
\end{align*}
which proves the identity in \eqref{2.dHdt}. Using the positive definiteness
of $\DD$, a computation shows that $(L_{ij})$ is positive definite too, and
the inequality in \eqref{2.dHdt} follows.
\end{proof}


\subsection{Singularities and degeneracies in the energy-transport system}

We denote by $ n$ and $ E$ the particle and energy densities
depending on the dual entropy variable $\mu=(\mu_0,\mu_1)
=(\lambda_0+V\lambda_1,\lambda_1)$. We have the (implicit) formulation
\begin{align*}
   n(\mu) &= \int_{\T^d}\frac{dp}{\eta + \exp(-\mu_0+U_0 n(\mu)
  - \mu_1\eps(p))}, \\
   E(\mu) &= \int_{\T^d}\frac{\eps(p)dp}{\eta + \exp(-\mu_0+U_0 n(\mu)
	- \mu_1\eps(p))}.
\end{align*}

\begin{lemma}\label{lem.det}
Let $\omega_i(\mu):=\int_{\T^d}\F(1-\eta\F)\eps(p)^i dp$, $i\in\N_0$. 
Then 
$$
  \det\frac{\pa(n, E)}{\pa\mu}
  = \frac{\omega_0\omega_2-\omega_1^2}{1-U_0\mu_1\omega_0}.
$$
\end{lemma}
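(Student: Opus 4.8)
The plan is to differentiate the implicit relations defining $n(\mu)$ and $E(\mu)$ with respect to $\mu_0$ and $\mu_1$, express everything in terms of the moments $\omega_i$, and then compute the $2\times 2$ determinant directly. First I would record the basic differentiation rule for $\F=\F(\lambda)$ with $\lambda_0=\mu_0-U_0 n(\mu)$ and $\lambda_1=\mu_1$: since $\pa\F/\pa\lambda_0=\F(1-\eta\F)$ and $\pa\F/\pa\lambda_1=\eps\F(1-\eta\F)$, the chain rule gives
\begin{align*}
  \frac{\pa\F}{\pa\mu_0} &= \F(1-\eta\F)\Big(1-U_0\frac{\pa n}{\pa\mu_0}\Big), \\
  \frac{\pa\F}{\pa\mu_1} &= \F(1-\eta\F)\Big(\eps-U_0\frac{\pa n}{\pa\mu_1}\Big).
\end{align*}
Integrating the first relation over $\T^d$ and solving for $\pa n/\pa\mu_0$ yields $\pa n/\pa\mu_0=\omega_0/(1-U_0\mu_1\omega_0)$ — wait, more carefully: integrating gives $\pa n/\pa\mu_0=\omega_0(1-U_0\,\pa n/\pa\mu_0)$, hence $\pa n/\pa\mu_0=\omega_0/(1+U_0\omega_0)$; the factor $1-U_0\mu_1\omega_0$ will instead enter through the $\mu_1$-derivatives, so I would be careful to track where the denominator actually comes from. (The denominator in the statement suggests the correct bookkeeping produces $1-U_0\mu_1\omega_0$ rather than $1+U_0\omega_0$; I would recheck the sign convention $\lambda_0=\mu_0+V\lambda_1=\mu_0-U_0 n\mu_1$ — indeed $V=-U_0 n$ and $\mu_0=\lambda_0+\lambda_1 V$ gives $\lambda_0=\mu_0+U_0 n\mu_1$, so $\pa\lambda_0/\pa\mu_0 = 1+U_0\mu_1\,\pa n/\pa\mu_0$... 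I would sort this out cleanly at the start.)

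Second, I would similarly differentiate $E(\mu)=\int_{\T^d}\eps\F\,dp$ in both variables, obtaining expressions for $\pa E/\pa\mu_0$ and $\pa E/\pa\mu_1$ in terms of $\omega_1$, $\omega_2$ and the already-computed derivatives of $n$. Schematically, using the abbreviation $\Lambda := 1 + U_0\mu_1\omega_0$ (or its correctly-signed analogue), one finds $\pa n/\pa\mu_0$, $\pa n/\pa\mu_1$, $\pa E/\pa\mu_0$, $\pa E/\pa\mu_1$ each as a rational function with denominator $\Lambda$ and numerator linear in $\omega_0,\omega_1,\omega_2$. Then I would form
$$
  \det\frac{\pa(n,E)}{\pa\mu}
  = \frac{\pa n}{\pa\mu_0}\frac{\pa E}{\pa\mu_1} - \frac{\pa n}{\pa\mu_1}\frac{\pa E}{\pa\mu_0},
$$
and the key algebraic observation is that all the cross terms involving $U_0$ cancel, leaving exactly $(\omega_0\omega_2-\omega_1^2)/\Lambda$. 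This cancellation is essentially the statement that the potential $V=-U_0 n$ acts as a "shear" on the $(\lambda_0,\lambda_1)$-variables that does not change the volume form except through the single scalar factor $\Lambda$.

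The main obstacle I expect is purely a matter of careful signs and bookkeeping: getting the denominator to come out as $1-U_0\mu_1\omega_0$ exactly as stated (rather than $1+U_0\mu_1\omega_0$ or $1+U_0\omega_0$) requires being consistent about the definition $\mu_0=\lambda_0+\lambda_1 V$ with $V=-U_0 n$, which variable is held fixed in which partial derivative, and the fact that $\F$, $\omega_i$ are evaluated at $\lambda=(\mu_0+U_0 n\mu_1,\mu_1)$. Once the linear system for the four partials is set up correctly, the determinant computation itself is a short and mechanical cancellation. An alternative, cleaner route would be to use the chain rule $\pa(n,E)/\pa\mu = \big(\pa(n,E)/\pa\lambda\big)\cdot\big(\pa\lambda/\pa\mu\big)$, compute $\det\,\pa(n,E)/\pa\lambda = \omega_0\omega_2-\omega_1^2$ directly from the definitions (this Jacobian in the $\lambda$-variables has no potential in it), and then show $\det\,\pa\lambda/\pa\mu = 1/(1-U_0\mu_1\omega_0)$ by differentiating the single scalar relation $\lambda_0 = \mu_0 + U_0\,n(\lambda)\,\mu_1$; I would probably present the proof this way since it isolates the one place the potential enters.
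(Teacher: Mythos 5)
Your primary route---differentiate the implicit relations for $n(\mu)$ and $E(\mu)$, solve the resulting linear system for the four partials, and expand the $2\times2$ determinant---is exactly the paper's proof, and your corrected bookkeeping is the right one: $\mu_0=\lambda_0+\lambda_1V$ with $V=-U_0n$ gives $\lambda_0=\mu_0+U_0n\mu_1$, hence $\pa n/\pa\mu_0=\omega_0(1+U_0\mu_1\,\pa n/\pa\mu_0)$ and $\pa n/\pa\mu_0=\omega_0/(1-U_0\mu_1\omega_0)$; your first guess $\omega_0/(1+U_0\omega_0)$ came from the dropped factor $\mu_1$ and a sign, as you suspected. (Do not be alarmed if your formulas fail to match the paper's displayed intermediate identities literally: those contain typos---$U_0\mu_0$ for $U_0\mu_1$, $\pa n/\pa\mu_0$ for $\pa n/\pa\mu_1$ in the second relation, $U$ for $U_0$, and the exponent in the displayed implicit formulation should read $-\mu_0-U_0n\mu_1-\mu_1\eps$; taken at face value it would produce your ``wrong'' denominator.) Your alternative route is genuinely different from the paper's and cleaner. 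Since $\pa n/\pa\lambda_i=\omega_i$ and $\pa E/\pa\lambda_i=\omega_{i+1}$, one has $\det\pa(n,E)/\pa\lambda=\omega_0\omega_2-\omega_1^2$ immediately, and because $\mu_1=\lambda_1$ the matrix $\pa\mu/\pa\lambda$ is triangular with
$$
  \det\frac{\pa\mu}{\pa\lambda}=\frac{\pa\mu_0}{\pa\lambda_0}
  =1-U_0\lambda_1\frac{\pa n}{\pa\lambda_0}=1-U_0\mu_1\omega_0,
$$
so the claim follows from multiplying determinants with no cancellation left to verify; this factorization also makes transparent that the potential enters only through the single scalar $1-U_0\mu_1\omega_0$. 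The paper's version buys only the explicit individual partials $\pa n/\pa\mu_i$, $\pa E/\pa\mu_i$, which the lemma itself does not require.
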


\begin{proof}
We differentiate
$$
  \frac{\pa n}{\pa\mu_0} 
  = \bigg(1+U_0\mu_0\frac{\pa n}{\pa\mu_0}\bigg)\omega_0, \quad
  \frac{\pa n}{\pa\mu_1} 
  = U n + U_0\mu_1\omega_0\frac{\pa n}{\pa\mu_0} + \omega_1.
$$
This gives after a rearrangement
$$
  \frac{\pa n}{\pa\mu_0} = \frac{\omega_0}{1-U_0\mu_1\omega_0}, \quad
  \frac{\pa n}{\pa\mu_1} 
  = \frac{U n\omega_0+\omega_1}{1-U_0\mu_1\omega_0}.
$$
In a similar way, we obtain 
$$
  \frac{\pa E}{\pa\mu_0} = \frac{\omega_1}{1-U_0\mu_1\omega_0}, 
	\quad\frac{\pa E}{\pa\mu_1} 
  = \frac{U n\omega_1 + U_0\mu_1(\omega_1
  	-\omega_0\omega_2)+\omega_2}{1-U_0\mu_1\omega_0},
$$
and with
$$
  \det\frac{\pa(n, E)}{\pa\mu}
  =  \frac{\pa n}{\pa\mu_0} \frac{\pa E}{\pa\mu_1}
  -  \frac{\pa n}{\pa\mu_1} \frac{\pa E}{\pa\mu_0}
  = \frac{(\omega_0\omega_2-\omega_1^2)(1-U_0\mu_1\omega_0)}{(1-U_0\mu_1\omega_0)^2},
$$
the conclusion follows. 
\end{proof}

Since $\mu_1$ can be positive and $\omega_0>0$, the expression $1-U_0\mu_1\omega_0$
may vanish,
so the determinant of $\pa(n,E)/\pa\mu$ may be not finite. Moreover, 
the numerator of the determinant may vanish, and the function $\mu\mapsto(n,E)$
may be not invertible. This is made more explicit in the following remark.

\begin{remark}[Case $\mu=0$]\rm
In the Maxwell-Boltzmann case, we can make the numerator
of the determinant in Lemma \ref{lem.det}
explicit. Indeed, it is clear that $\omega_0= n$ and $\omega_1= E$.
For the computation of $\omega_2$, we observe first that
\begin{align}
  n &= \int_{\T^d}\F dp = \exp(\mu_0-U_0 n\mu_1)\prod_{k=1}^d\int_{\T}
	\exp\big(-2\eps_0\cos(2\pi p_k)\big)dp_k \nonumber \\
	&= \exp(\mu_0-U_0 n\mu_1)I_0^d, \label{2.nI} \\
	E &= \int_{\T^d}\F\eps dp = -2\eps_0\exp(\mu_0-U_0 n\mu_1)\sum_{i=1}^d\prod_{k\neq i}^d
	\int_{\T}\exp\big(-2\eps_0\cos(2\pi p_k)v)dp_k \nonumber \\
	&\phantom{xx}{}\times \int_{\T}
	\exp\big(-2\eps_0\cos(2\pi p_i)\big)\cos(2\pi p_i)dp_i \nonumber \\
	&= -2d\eps_0\exp(\mu_0-U_0 n\mu_1)I_0^{d-1}I_1, \label{2.EI}
\end{align}
where, by symmetry,
$$
  I_0 := \int_{\T}\exp\big(-2\eps_0\cos(2\pi p_1)\big)dp_1, \quad
	I_1 := \int_{\T}\exp\big(-2\eps_0\cos(2\pi p_1)\big)\cos(2\pi p_1)dp_1.
$$
Now, we have
$$
  \omega_2
  = 4\eps_0^2\sum_{i=1}^d\int_{\T^d}\cos^2(2\pi p_i)\F dp 
  + 8\eps_0^2\sum_{i=1}^d\sum_{j=1,\,j\neq i}^d
  \int_{\T^d}\cos(2\pi p_i)\cos(2\pi p_j)\F dp.
$$
Expanding the exponentials in $\F$ and using \eqref{2.nI}-\eqref{2.EI}, we find that
$$
  \int_{\T^d}\cos(2\pi p_i)\cos(2\pi p_j)\F dp
	= \exp(\mu_0-U_0 n\mu_1) I_0^{d-2}I_1^2 
	= \frac{E^2}{(2d\eps_0)^2n},
$$
and this expression is independent of $i\neq j$.
For $i=j$, we use the identity
$\sin(2\pi p_i)\F=(\pa\F/\pa p_i)/(4\pi\eps_0\mu_1)$ and integration by parts:
\begin{align*}
  \int_{\T^d}\cos^2(2\pi p_i)\F dp
  &= \int_{\T^d}(1-\sin^2(2\pi p_i))\F dp
  = n - \int_{\T^d}\frac{\sin(2\pi p_i)}{4\pi\eps_0\mu_1}
  \frac{\pa\F}{\pa p_i}dp \\
  &= n + \int_{\T^d}\frac{\cos(2\pi p_i)}{2\eps_0\mu_1}\F dp. 
\end{align*}
Summing over $i=1,\ldots,n$ gives
$$
  4\eps_0^2\sum_{i=1}^d  \int_{\T^d}\cos^2(2\pi p_i)\F dp
  = 4\eps_0^2 dn - \frac{E}{\mu_1}.
$$
We conclude that
$\omega_2 = 4\eps_0^2dn - E/\mu_1 + (d-1)E^2/(dn)$ and consequently, 
$$
  \omega_0\omega_2-\omega_1^2 = 4\eps_0^2 dn - \frac{En}{\mu_1} - \frac{E^2}{d}.
$$
For certain values of $\mu_1$ or $(n,E)$, this expression may vanish such that
$\det\pa(n,E)/\pa\mu=0$ at these values.
This shows that the relation between $(n,E)$ and $\mu$ needs to treated with care.
\qed
\end{remark}

\begin{remark}[Degeneracy in the entropy production]\rm
A tedious computation, detailed in \cite[Chapter 8.4]{Bra17}, shows that the entropy
production can be written as
$$
  \sum_{i,j=0}^1\int_{\R^d}\na\mu_i^\top L_{ij}\na\mu_j dx
  = g_1(\lambda)(1-U_0\mu_1\omega_0)|\na n|^2 
  + g_2(\lambda)|g_3(\lambda)\na n-\na E|^2,
$$
where $g_i(\lambda)$, $i=1,2,3$, are functions depending on 
$\omega_i$, defined in Lemma \ref{lem.det}, and on
$$
  \Gamma_i = \int_{\T^d}\eps(p)^i|\na\eps|^2\F(1-\eta\F)dp, \quad i=0,1,2.
$$
The above formula shows that we lose the gradient estimate if
$1-U_0\mu_1\omega_0=0$.
\qed
\end{remark}

\begin{remark}[Comparison with the semiconductor case]\rm
For the semiconductor energy-transport equations in the parabolic band
approximation, we do not face the singularities and 
degeneracies occuring in the model for optical lattices.	Indeed,
let the potential $V$ be given (to simplify).
According to Example 6.8 in \cite{Jue09}, we have
$$
  n = \mu_1^{-3/2}\exp(\mu_0+\mu_1 V), \quad
  E = \frac32\mu_1^{-5/2}\exp(\mu_0+\mu_1 V).
$$
Then
$$
  \det\frac{\pa(n,E)}{\pa\mu} = \det\begin{pmatrix}
  n & nV-E \\ E & -5E/(2\mu_1)+EV
  \end{pmatrix}
  = -\frac23E^2,
$$
which is nonzero as long as $E>0$. Furthermore, by Remark 8.12 in \cite{Jue09},
it holds that $\omega_0=n$, $\omega_1=E$, and $\omega_2=5E^2/(3n)$, and so
$$
  \omega_0\omega_2-\omega_1^2 = \frac23 E^2.
$$
This expression is degenerate only at the boundary of the domain of definition 
(i.e.\ at $E=0$). Often, such kind of degeneracies may be handled; an important
example is the porous-medium equation. 
In the case of optical lattices, the degeneracy may occur in the interior of
the domain of definition, which is much more delicate.
\qed
\end{remark}


\section{High-temperature expansion}\label{sec.high}

The Lagrange multiplier $\lambda_1$ is interpreted as the negative inverse temperature,
so high temperatures correspond to small values of $|\lambda_1|$. In this
section, we perform a high-temperature expansion of \eqref{1.et1}-\eqref{1.et2},
i.e., we expand $\F(\lambda)$ around $(\lambda_0,0)$ for small $|\lambda_1|$
up to first order. Our ansatz is
\begin{align}
  \F(\lambda) &= \F(\lambda_0,0) + \frac{\pa \F}{\pa\lambda_1}(\lambda_0,0)\lambda_1
	+ O(\lambda_1^2) \nonumber \\
	&= \F(\lambda_0,0) + \eps\F(\lambda_0,0)\big(1-\eta\F(\lambda_0,0)\big)\lambda_1
	+ O(\lambda_1^2). \label{3.F}
\end{align}

\subsection{Zeroth-order expansion}

At zeroth-order, we have by \eqref{2.auxJ}, \eqref{2.auxg}, and using
formula \eqref{2.uu} from the appendix,
\begin{align*}
  n &= \int_{\T^d}\F(\lambda_0,0)dp + O(\lambda_1) 
	= \F(\lambda_0,0) + O(\lambda_1), \\
	J_n &= -\tau\int_{\T^d} \big(u\otimes u\na_x\F(\lambda_0,0) 
	+ u\na_x V\cdot\na_p\F(\lambda_0,0)\big)dp \\
	&= -\tau\int_{\T^d}(u\otimes u)n dp + O(\lambda_1) 
	= -\frac{\tau}{2}(4\pi\eps_0)^2\na n + O(\lambda_1).
\end{align*}
Therefore, up to order $O(\lambda_1)$, we infer that
$$
  \pa_t n = \frac12(4\pi\eps_0)^2\diver(\tau\na n), \quad x\in\R^d,\ t>0,
$$
At high temperature, the relaxation time depends on the particle density in a 
nonlinear way, $\tau=\tau_0/(n(1-\eta n))$ \cite{MRR11}. At low densities, i.e.\
$\eta=0$, we obtain the logarithmic diffusion equation
$$
  \pa_t n = \eps_1\Delta\log n, \quad t>0, \quad n(0,\cdot)=n_0\quad\mbox{in }\R^d,
$$
where $\eps_1=\frac12\tau_0(4\pi\eps_0)^2$. We already mentioned in the introduction
that the (smooth) solution to this equation in two space dimensions loses mass, 
which is unphysical. Therefore, we compute the next-order expansion.


\subsection{First-order expansion}

We calculate, using \eqref{2.eps2},
\begin{align*}
  n &= \int_{\T^d}\big(\F(\lambda_0,0)+\eps(\F(1-\eta\F))(\lambda_0,0)
	\lambda_1\big)dp + O(\lambda_1^2) \\
	&= \F(\lambda_0,0) + (\F(1-\eta\F))(\lambda_0,0)\lambda_1\int_{\T^d}\eps(p)dp
	+ O(\lambda_1^2) = \F(\lambda_0,0) + O(\lambda_1^2), \\
  E &= \F(\lambda_0,0)\int_{\T^d}\eps(p)dp + (\F(1-\eta\F))(\lambda_0,0)\lambda_1
	\int_{\T^d}\eps(p)^2dp + O(\lambda_1^2) \\
	&= 2d\eps_0^2(\F(1-\eta\F))(\lambda_0,0)\lambda_1 + O(\lambda_1^2)
	= 2d\eps_0^2(\F(1-\eta\F))(\lambda_0,0)\lambda_1 + O(\lambda_1^2).
\end{align*}
Therefore, by \eqref{3.F},
$$
  \F(\lambda) = n + \eps\frac{E}{2d\eps_0^2} + O(\lambda_1^2),
$$
and \eqref{2.auxg} and $\na_p\eps=u$ give, up to order $O(\lambda_1^2)$,
\begin{align*}
  g &= -\tau\big(u\cdot\na_x\F(\lambda) + \na_x V\cdot\na_p\F(\lambda)\big) \\
	&= -\tau u\cdot\bigg(\na_x n + \frac{E}{2d\eps_0^2}\na_x V\bigg) 
	- \tau\eps u\cdot\frac{\na_x E}{2d\eps_0^2}.
\end{align*}
Then, by \eqref{2.auxJ}-\eqref{2.auxg} and taking into account 
\eqref{2.uu}--\eqref{2.eps2uu}, we infer that, again up to first order,
\begin{align*}
	J_n &= -\tau\int_{\T^d} u\otimes udp\na_x n
	- \tau\int_{\T^d}u\otimes udp\na_x V\frac{E}{d\eps_1}
	= -8\pi^2\tau\eps_0^2\na_x n - \frac{4\pi^2}{d}\tau E\na_x V, \\
	J_E &= -\frac{\tau}{2d\eps_0^2}\int_{\T^d}\eps^2(u\otimes u)dp \na_x E 
	= -4\pi^2\frac{2d-1}{d}\eps_0^2\na_x E.
\end{align*}
Therefore, the first-order expansion leads to
\begin{align}
  & \pa_t n = 8\pi^2\eps_0^2\diver\bigg(\tau\na n 
	+ \frac{\tau}{2d\eps_0^2}E\na V\bigg), \label{2.et1} \\
	& \pa_t E = 8\pi^2\eps_0^2\frac{2d-1}{2d}\diver(\tau\na E) 
	- 8\pi^2\eps_0^2\tau\na V\cdot\bigg(\na n+\frac{E}{2d\eps_0^2}\na V\bigg). 
	\label{2.et2}
\end{align}
We rescale the time by
$t_s=(8\pi^2\eps_0^2\tau_0)t$ and introduce $U=U_0/(2d\eps_0^2)$ and
$W=1-UE$. Then, writing
again $t$ instead of $t_s$, system \eqref{2.et1}-\eqref{2.et2} becomes
\begin{equation}\label{2.eq}
  \pa_t n = \diver\bigg(\frac{W\na n}{n(1-\eta n)}\bigg), \quad
  \pa_t W = \frac{2d-1}{2d}\diver\bigg(\frac{\na W}{n(1-\eta n)}\bigg)
  - U\frac{W|\na n|^2}{n(1-\eta n)}.
\end{equation}
The existence of weak solutions to a time-discrete version of \eqref{2.eq},
together with periodic boundary conditions, is shown in Section \ref{sec.ex}.


\section{A strong convergence result for the gradient}\label{sec.conv}

The key tool of the existence analysis of Section \ref{sec.ex} is the following
result on the strong convergence of the gradients of certain approximate
solutions for the following equation. 
Let $\dt>0$, $\overline{n}\in L^\infty(\Omega)$, 
$y\in L^\infty(\Omega)\cap H^1(\Omega)$, and 
$\Psi\in C^1(\R)$. We consider the equation 
\begin{equation}\label{auxeq2}
	\frac{1}{\dt}(n-\overline{n}) = \diver\big(\na(y\Psi(n))-\Psi(n)\na y\big)
	\end{equation}	
for $n\in L^\infty(\Omega)$ such that $y\Psi(n)\in H^1(\Omega)$.

\begin{proposition}\label{prop.conv}
Let $\Omega\subset\R^d$ be a bounded domain, $\eps>0$, $\triangle t>0$, let
$\overline{n}\in L^\infty(\Omega)$ be such that $\overline{n}\ge 0$ in $\Omega$, 
and let $\Psi\in C^1(\R)$ satisfy $\Psi'>0$.
Let $(y_\eps)$ be a bounded sequence in $H^1(\Omega)$ satisfying 
$y_\eps\ge C(\eps)>0$ for some $C(\eps)>0$, 
$y_\eps\to y$ strongly in $L^2(\Omega)$ and weakly in 
$H^1(\Omega)$ as $\eps\to 0$.
Furthermore, let $n_\eps\in L^2(\Omega)$ with $\Psi(n_\eps)\in H^1(\Omega)$  
be a weak solution to
\begin{equation}\label{auxeq}
  \frac{1}{\dt}\int_\Omega(n_\eps-\overline{n})\phi dx
	+ \int_\Omega y_\eps\na \Psi(n_\eps)\cdot\na\phi dx = 0
\end{equation}
for all $\phi\in H^1(\Omega)$. Then there exist a function $n\in L^\infty(\Omega)$ 
such that $y\Psi(n)\in H^1(\Omega)$, being a weak solution of \eqref{auxeq2}, 
and a subsequence of $(n_\eps)$, which is not relabeled, such that, as $\eps\to 0$,
\begin{align*}
  y^{1/2}y_\eps^{1/2}\na \Psi(n_\eps) \to \na(y\Psi(n))-\Psi(n)\na y 
	& \quad\mbox{strongly in }L^2(\Omega), \\
	\mathrm{1}_{\{y>0\}}(n_\eps-n)\to 0 &\quad\mbox{strongly in }L^2(\Omega).
\end{align*}
\end{proposition}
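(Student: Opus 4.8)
The plan is to obtain uniform bounds, extract weak limits, and then upgrade weak convergence of the gradients to strong convergence by a Minty-type monotonicity argument combined with a careful treatment of the degeneracy set $\{y=0\}$. First I would test \eqref{auxeq} with $\phi=\Psi(n_\eps)-\Psi(\overline n)$ (or simply with $\Psi(n_\eps)$, after noting that an $L^\infty$-bound on $n_\eps$ is available from the maximum principle, since $\overline n\in L^\infty$ and $\Psi'>0$). This yields $\int_\Omega y_\eps|\na\Psi(n_\eps)|^2\,dx\le C$ uniformly in $\eps$, where $C$ does not depend on $\eps$ because $\|y_\eps\|_{H^1}$ is bounded and $n_\eps$ is bounded in $L^\infty$. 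Writing $z_\eps:=y_\eps\Psi(n_\eps)$, one has $\na z_\eps=y_\eps\na\Psi(n_\eps)+\Psi(n_\eps)\na y_\eps$, and since the first term is bounded in $L^2$ by $\|y_\eps\|_{L^\infty}^{1/2}(\int y_\eps|\na\Psi(n_\eps)|^2)^{1/2}$ and the second by $\|\Psi(n_\eps)\|_{L^\infty}\|\na y_\eps\|_{L^2}$, we get $z_\eps$ bounded in $H^1(\Omega)$. Hence, up to a subsequence, $n_\eps\rightharpoonup n$ weakly-$*$ in $L^\infty$, $z_\eps\rightharpoonup z$ weakly in $H^1$ and strongly in $L^2$, and $y_\eps^{1/2}\na\Psi(n_\eps)\rightharpoonup \chi$ weakly in $L^2$ for some $\chi$. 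Since $y_\eps\to y$ in $L^2$ (and boundedly a.e.\ along a further subsequence), one identifies $z=y\Psi(n)$ by passing to the limit in $z_\eps=y_\eps\Psi(n_\eps)$ wherever $n_\eps$ converges; on $\{y>0\}$ this pins down $n$ as the a.e.\ limit of $n_\eps$, and one checks $\na z = \mathbf{1}_{\{y>0\}}y^{1/2}\chi + \Psi(n)\na y$, which identifies the candidate flux $\na(y\Psi(n))-\Psi(n)\na y = \mathbf 1_{\{y>0\}}y^{1/2}\chi$. Passing to the limit in the weak formulation \eqref{auxeq} then shows $n$ solves \eqref{auxeq2}.

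The heart of the matter is the strong convergence. The strategy is the classical energy argument: show that $\limsup_{\eps\to0}\int_\Omega y_\eps|\na\Psi(n_\eps)|^2\,dx \le \int_\Omega \mathbf 1_{\{y>0\}}|\chi|^2\,dx$, which combined with weak lower semicontinuity of the convex functional $v\mapsto\int y_\eps^{1/2}|v|^2$ (adapted to varying weights) gives convergence of the $L^2$-norms and hence $y_\eps^{1/2}\na\Psi(n_\eps)\to\mathbf 1_{\{y>0\}}y^{1/2}\chi$ strongly in $L^2$. To get the $\limsup$ inequality I would test \eqref{auxeq} with $\phi=\Psi(n_\eps)-\Psi(n)$ — note $\Psi(n)$ need not lie in $H^1(\Omega)$, only $y\Psi(n)$ does, so this must be done with care. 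The way around this is to test instead with $\phi_\eps$ built from the limit flux: writing $y\Psi(n)=z\in H^1$, use $\phi=\Psi(n_\eps)- y_\eps^{-1}z$ is illegal since $y_\eps^{-1}$ blows up; the correct device, following the structure of the statement, is to compare $\int y_\eps|\na\Psi(n_\eps)|^2$ with $\int y_\eps\na\Psi(n_\eps)\cdot\na\Psi(n)$ only on the good set and to control the contribution near $\{y=0\}$ using that $y_\eps|\na\Psi(n_\eps)|^2$ is equi-integrable there because the total integral is bounded and $y_\eps\to0$ in $L^1$ on shrinking neighborhoods of $\{y=0\}$. Concretely: split $\Omega=\{y>\delta\}\cup\{y\le\delta\}$; on $\{y>\delta\}$ one has $y_\eps$ bounded below for $\eps$ small on a slightly smaller set, so $\na\Psi(n_\eps)$ itself is bounded in $L^2$ there, $\Psi(n_\eps)$ is bounded in $H^1(\{y>\delta\})$, and a genuine Minty/monotonicity argument (using $\Psi'>0$, i.e.\ monotonicity of $\Psi$) applies to give $n_\eps\to n$ and $\na\Psi(n_\eps)\to\na\Psi(n)$ strongly on $\{y>\delta\}$; on $\{y\le\delta\}$ one uses $\int_{\{y\le\delta\}}y_\eps|\na\Psi(n_\eps)|^2 \le \int_{\{y\le\delta\}}(y_\eps-y)|\na\Psi(n_\eps)|^2 + \int_{\{y\le\delta\}}y|\na\Psi(n_\eps)|^2$; the last term is $\le \delta\int_\Omega|\na\Psi(n_\eps)|^2$... but this is only legitimate once we know $\na\Psi(n_\eps)$ is $L^2$, which fails globally.

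So the honest route on the bad set is different: I would instead test the equation for $n_\eps$ against $\phi = z/y_\eps$ — still illegal. The clean fix, and I expect this to be \emph{the main obstacle}, is to prove the $\limsup$ bound by testing \eqref{auxeq} with $\phi = \Psi(n_\eps) - \Psi(n_\eps)\wedge M \vee(-M)$-type truncations paired with a Minty argument, or — more robustly — by an argument à la Boccardo–Murat adapted to the degenerate weight: introduce the vector field $\sigma_\eps := y_\eps\na\Psi(n_\eps)$, which is bounded in $L^1$ (in fact $y_\eps^{1/2}\sigma_\eps^{1/2}\ldots$), show $\sigma_\eps\rightharpoonup\sigma:=\mathbf 1_{\{y>0\}}y^{1/2}\chi$ in the sense of distributions and that $\diver\sigma_\eps=\tfrac1{\dt}(n_\eps-\overline n)$ is bounded in $L^\infty$, then use a div-curl / compensated compactness lemma to pass to the limit in the product $\sigma_\eps\cdot\na\Psi(n_\eps)$ on $\{y>0\}$, while the contribution of $\{y>0\}\setminus\{y>\delta\}$ is handled by dominated convergence once strong $L^2$ convergence of $\na\Psi(n_\eps)$ on each $\{y>\delta\}$ is in hand and the tail $\int_{\{0<y\le\delta\}}y_\eps|\na\Psi(n_\eps)|^2$ is shown to vanish as $\delta\to0$ uniformly in $\eps$ via the uniform bound $\int_\Omega y_\eps|\na\Psi(n_\eps)|^2\le C$ together with $y_\eps\to y$ strongly in $L^2$ (which forces the measures $y_\eps\,dx$ restricted to $\{0<y\le\delta\}$ to be small uniformly in $\eps$ for $\delta$ small). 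Finally, once $y_\eps^{1/2}\na\Psi(n_\eps)\to\mathbf 1_{\{y>0\}}y^{1/2}\chi=\na(y\Psi(n))-\Psi(n)\na y$ strongly in $L^2$, the second claimed convergence $\mathbf 1_{\{y>0\}}(n_\eps-n)\to0$ in $L^2$ follows from the strong $L^2_{\mathrm{loc}}$ convergence of $\na\Psi(n_\eps)$ on $\{y>\delta\}$ (hence of $\Psi(n_\eps)$, hence of $n_\eps$ since $\Psi$ is a $C^1$ diffeomorphism onto its image), letting $\delta\to0$ and using the $L^\infty$ bound on $n_\eps$ for dominated convergence.
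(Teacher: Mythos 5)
Your Step-1 material (the $L^\infty$ bound on $n_\eps$, the uniform bound on $\int_\Omega y_\eps|\na\Psi(n_\eps)|^2dx$, the $H^1$ bound on $y_\eps\Psi(n_\eps)$, and the identification of the limit flux $\na(y\Psi(n))-\Psi(n)\na y$) matches the paper. But the heart of the proposition is the strong convergence of the weighted gradients, and there your argument has a genuine gap. You candidly discard two test functions as illegal, and the route you finally settle on rests on a false claim: from $\int_\Omega y_\eps|\na\Psi(n_\eps)|^2dx\le C$ and smallness of $\int_{\{0<y\le\delta\}}y_\eps\,dx$ you cannot conclude that $\int_{\{0<y\le\delta\}}y_\eps|\na\Psi(n_\eps)|^2dx$ is small uniformly in $\eps$; the weighted Dirichlet energy may concentrate entirely near $\{y=0\}$, and preventing exactly this concentration is the whole difficulty. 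The div--curl variant fails for the same structural reason: $\na\Psi(n_\eps)$ is not bounded in $L^2(\Omega)$, so the product $\sigma_\eps\cdot\na\Psi(n_\eps)$ is only an $L^1$ quantity and no compensated-compactness lemma applies. Note also that you are trying to prove strong convergence of $y_\eps^{1/2}\na\Psi(n_\eps)$, which is \emph{more} than the proposition asserts; the statement only claims convergence of the doubly weighted gradient $y^{1/2}y_\eps^{1/2}\na\Psi(n_\eps)$, and this weakening is not cosmetic --- it is what makes a proof possible.

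The missing idea is the admissible test function $\phi=y(\Psi(n_\eps)-\Psi(n))$. This is legal because $y\Psi(n)\in H^1(\Omega)$ (even though $\Psi(n)$ alone need not be) and $y,\Psi(n_\eps)\in H^1(\Omega)\cap L^\infty(\Omega)$. Inserting it into \eqref{auxeq} produces exactly $\int_\Omega y\,y_\eps|\na\Psi(n_\eps)|^2dx$ minus cross terms, i.e.\ the extra factor $y$ is built into the energy identity from the start, which is why the conclusion carries the weight $y^{1/2}$. To show the resulting right-hand side tends to zero one needs $\|y(\Psi(n_\eps)-\Psi(n))\|_{L^2(\Omega)}\to 0$, and the paper obtains this \emph{before} any gradient convergence, by an independent compactness step: $\na(y_\eps n_\eps^2)$ is bounded in $L^2(\Omega)$, so $y_\eps n_\eps^2$ converges strongly, whence $y_\eps^{1/2}(n_\eps-n)\to 0$ and then $\mathrm{1}_{\{y>0\}}(n_\eps-n)\to 0$ in $L^2(\Omega)$. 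In your write-up this second conclusion is derived only as a corollary of the strong gradient convergence on $\{y>\delta\}$ (itself resting on an unproven Minty step on a merely measurable set), so the logical order is inverted: without the prior strong convergence of $\mathrm{1}_{\{y>0\}}(n_\eps-n)$ you cannot close the energy argument at all.
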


\begin{proof} 
{\em Step 1.}
First, we derive some uniform bounds. Set $M=\|\overline{n}\|_{L^\infty(\Omega)}$. 
Taking $(\Psi(n_\eps)-\Psi(M))_+=\max\{0,\Psi(n_\eps)-\Psi(M)\}$ as a test function 
in \eqref{auxeq}, we find that
\begin{align*}
  \frac{1}{\dt}\int_\Omega & \big((n_\eps-M)-(\overline{n}-M)\big)
	(\Psi(n_\eps)-\Psi(M))_+ dx \\
	&{}+ \int_\Omega y_\eps\na \Psi(n_\eps)\cdot\na(\Psi(n_\eps)-\Psi(M))_+ 
	= 0.
\end{align*}
Since $-(\overline{n}-M)(\Psi(n_\eps)-\Psi(M))_+\ge 0$, it follows that
$$
  \frac{1}{\dt}\int_\Omega(n_\eps-M)_+(\Psi(n_\eps)-\Psi(M))_+ dx 
  + \int_\Omega y_\eps|\na(\Psi(n_\eps)-\Psi(M))_+|^2 dx \le 0
$$
and hence, $n_\eps\le M$ in $\Omega$. In a similar way, using 
$\Psi(n_\eps)_-=\min\{0,\Psi(n_\eps)\}$ as a test function and using 
$\overline{n}\ge 0$, we infer that $n_\eps\ge 0$. 
This shows that $(n_\eps)$ is bounded in 
$L^\infty(\Omega)$. Hence, there exists a subsequence which is not relabeled
such that, as $\eps\to 0$,
\begin{equation}\label{auxn}
  n_\eps\rightharpoonup^* n\quad \mbox{weakly* in }L^\infty(\Omega)
\end{equation}
for some function $n\in L^\infty(\Omega)$. Since $\Phi'$ is positive and continuous 
on $\R$, the boundedness of $(n_\eps)$ in $L^\infty(\Omega)$ implies 
that there exists a constant $C>0$  such that
$1/C\leq \Psi'(n_\eps)\leq C$ for all $\eps>0$.
Next, we choose the test function $\phi=\Psi(n_\eps)$
in \eqref{auxeq} and use $\Psi(n_\eps)\leq \Psi(M)$ to find that
$$
  \frac{1}{\dt}\int_\Omega n_\eps\Psi(n_\eps) dx 
	+ \int_\Omega y_\eps|\na\Psi(n_\eps)|^2 dx
	\le \frac{\Psi(M)}{\dt}\int_\Omega \overline{n} dx.
$$
We deduce that $(y_\eps^{1/2}\na\Psi(n_\eps))$ is bounded 
in $L^2(\Omega)$ and, for a subsequence, 
\begin{equation}\label{auxna}
  y_\eps^{1/2}\na\Psi(n_\eps) \rightharpoonup \xi\quad\mbox{weakly in }L^2(\Omega)
\end{equation}
for some function $\xi\in L^2(\Omega)$. Now, let $\phi$ be a smooth test function. 
Then we can take the limit, for a subsequence, in \eqref{auxeq} and obtain
\begin{equation}\label{auxeq_im_Beweis}
  \frac{1}{\dt}\int_\Omega(n-\overline{n})\phi dx
  + \int_\Omega y^{1/2}\xi\cdot\na\phi dx = 0.
\end{equation}
Note that this equation holds also for all $\phi\in H^1(\Omega)$.

{\em Step 2.}
As $(y_\eps)$ is strongly converging in $L^2(\Omega)$,
we deduce from \eqref{auxn} that $y_\eps^{1/2}(n_\eps-n)\rightharpoonup 0$ 
weakly in $L^2(\Omega)$.
We claim that this convergence is even strong. Indeed,
the sequence 
$$
  \na(y_\eps n_\eps^2) = 2\frac{y_\eps^{1/2}n_\eps}{\Psi'(n_\eps)}
	y_\eps^{1/2}\na \Psi(n_\eps) + n_\eps^2\na y_\eps
$$
is uniformly bounded in $L^2(\Omega)$.
Thus, $(y_\eps n_\eps^2)$ is bounded in $H^1(\Omega)$ and by compactness, for a
subsequence, $y_\eps n_\eps^2\to \zeta\ge 0$ strongly in $L^2(\Omega)$ or
$y_\eps^{1/2}n_\eps\to \zeta^{1/2}$ strongly in $L^4(\Omega)$. The
strong convergence of $(y_\eps^{1/2})$ in $L^2(\Omega)$ and the weak* convergence
of $(n_\eps)$ in $L^\infty(\Omega)$ imply that $y_\eps^{1/2}n_\eps\rightharpoonup
y^{1/2}n$ weakly in $L^2(\Omega)$. Therefore, $\zeta^{1/2}=y^{1/2}n$ and
\begin{equation}\label{auxyn}
  y_\eps^{1/2}(n_\eps-n)\to 0\quad\mbox{strongly in }L^2(\Omega).
\end{equation}
This proves the claim. 

{\em Step 3.} The next goal is to show that
\begin{equation}\label{auxnn}
  \mathrm{1}_{\{y>0\}}(n_\eps-n)\to 0\quad\mbox{strongly in }L^2(\Omega).
\end{equation}
Taking into account \eqref{auxyn}, 
the strong convergence of $(y_\eps^{1/2})$ in $L^2(\Omega)$,
and the $L^\infty$ bound for $(n_\eps)$, it follows that
$$
  \|y^{1/2}(n_\eps-n)\|_{L^2(\Omega)}
	\le \|y_\eps^{1/2}(n_\eps-n)\|_{L^2(\Omega)}
	+ \|y^{1/2}-y_\eps^{1/2}\|_{L^2(\Omega)}\|n_\eps-n\|_{L^\infty(\Omega)}
$$
converges to zero. Thus, for a subsequence, $y^{1/2}(n_\eps-n)\to 0$ a.e.\
in $\Omega$ and consequently, $n_\eps-n\to 0$ a.e.\ in $\{y>0\}$.
Then the a.e.~pointwise convergence $\mathrm{1}_{\{y>0\}}(n_\eps-n)\to 0$
and the dominated convergence theorem show \eqref{auxnn}.

{\em Step 4.} We wish to identify $\xi$ in \eqref{auxna} and \eqref{auxeq_im_Beweis}. 
The bounds for $(\na y_\eps)$ and $(y_\eps\na\Psi(n_\eps))$ in $L^2(\Omega)$ show 
that $(\na(y_\eps\Psi(n_\eps)))$ is bounded in $L^2(\Omega)$ and so, 
$(y_\eps \Psi(n_\eps))$ is bounded in $H^1(\Omega)$. By compactness, for a subsequence,
$\na(y_\eps\Psi(n_\eps))\rightharpoonup \na(y\Psi(n))$ weakly in $L^2(\Omega)$ and
$y_\eps\Psi(n_\eps)\to y\Psi(n)$ strongly in $L^2(\Omega)$. We can identify the limit
since $y_\eps\to y$ strongly in $L^2(\Omega)$ and 
$\Psi(n_\eps)\rightharpoonup^* \theta$ weakly* in $L^\infty(\Omega)$ 
with $\theta=\Psi(n)$ in $\{y>0\}$ lead to 
$y_\eps \Psi(n_\eps)\rightharpoonup y\Psi(n)$ weakly in
$L^2(\Omega)$. Moreover, since $\Psi(n_\eps)\to \Psi(n)$ strongly in $L^2(\Omega)$ 
and $\na y_\eps\to\na y$ weakly in $L^2(\Omega)$, we infer that
\begin{equation}\label{auxyy}
	y_\eps\na \Psi(n_\eps) = \na(y_\eps \Psi(n_\eps))-\Psi(n_\eps)
	\na y_\eps\rightharpoonup\na(y \Psi(n))-\Psi(n)\na y 
	\quad\mbox{weakly in }L^2(\Omega).
\end{equation}
Here, we have used additionally that $(y_\eps\na \Psi(n_\eps))$ is bounded in 
$L^2(\Omega)$ and that $L^1(\Omega)$ is dense in $L^2(\Omega)$. 
Similarly as above, we deduce that
\begin{equation}\label{auxabschaetzung_gut_fuer_spaeter}
  \|(y^{1/2}-y_\eps^{1/2})y_\eps^{1/2}\na \Psi(n_\eps)\|_{L^1(\Omega)}
	\leq \|(y^{1/2}-y_\eps^{1/2})\|_{L^2(\Omega)}
	\|y_\eps^{1/2}\na \Psi(n_\eps)\|_{L^2(\Omega)}
\end{equation}
converges to zero. Therefore, $y^{1/2}\xi=\na(y \Psi(n))-\Psi(n)\na y$ in
$\Omega$.

{\em Step 5.} We obtain from \eqref{auxyy} that 
$$
  y^{1/2}\big(y_\eps\na \Psi(n_\eps)-y^{1/2}\xi\big)\rightharpoonup 0 
	\quad\mbox{weakly in }L^2(\Omega)
$$
and consequently,
\begin{align}\label{auxc1}
	\int_\Omega y_\eps\xi\cdot\big(y^{1/2}\na \Psi(n_\eps)-\xi\big)dx
	&= \int_\Omega \xi\cdot y^{1/2}\big(y_\eps\na \Psi(n_\eps)-y^{1/2}\xi\big)dx \\
	&\phantom{xx}{}+\int_\Omega |\xi|^2\big(y-y_\eps\big)dx
	\to 0, \nonumber
\end{align}
applying the dominated convergence theorem to the last integral. 
Furthermore, using the test function $\phi=y(\Psi(n_\eps)-\Psi(n))$ in \eqref{auxeq},
\begin{align*}
  \bigg| & \int_\Omega y_\eps y^{1/2}\na \Psi(n_\eps)\cdot
	\big(y^{1/2}\na \Psi(n_\eps)-\xi\big)dx\bigg| \\
	&= \bigg|\int_\Omega\Big(y_\eps y|\na\Psi(n_\eps)|^2 - y_\eps\na\Psi(n_\eps)
	\cdot\big(\na(y\Psi(n))-\Psi(n)\na y\big)\Big)dx\bigg|\\
	&= \bigg|\int_\Omega y_\eps\na \Psi(n_\eps)\cdot\na(y(\Psi(n_\eps)-\Psi(n))) 
	- \int_\Omega y_\eps^{1/2}\na\Psi(n_\eps) 
	\cdot\na y\big(y_\eps^{1/2}(\Psi(n_\eps)-\Psi(n))\big)dx \bigg| \\
	&= \bigg|\frac{1}{\dt}\int_\Omega(\overline{n}-n_\eps)y(\Psi(n_\eps)-\Psi(n))dx
	- \int_\Omega y_\eps^{1/2}\na\Psi(n_\eps)\cdot\na y\big(y_\eps^{1/2}
	(\Psi(n_\eps)-\Psi(n))\big)dx \bigg| \\
	&\le \frac{1}{\dt}\|n_\eps-\overline{n}\|_{L^2(\Omega)}
	\|y(\Psi(n_\eps)-\Psi(n))\|_{L^2(\Omega)}	\\
	&\phantom{xx}{}+ \|y_\eps^{1/2}\na\Psi(n_\eps)\|_{L^2(\Omega)}
	\bigg(\int_\Omega y_\eps(\Psi(n_\eps)-\Psi(n))^2|\na y|^2 dx\bigg).
\end{align*}

By \eqref{auxnn}, we have $\|y(\Psi(n_\eps)-\Psi(n))\|_{L^2(\Omega)}\to 0$ 
and by \eqref{auxyn}, $y_\eps(\Psi(n_\eps)-\Psi(n))^2\to 0$ in $\Omega$ for 
a subsequence. Then, by dominated convergence, 
$\int_\Omega y_\eps(\Psi(n_\eps)-\Psi(n))^2|\na y|^2 dx\to 0$.
We have proved that
\begin{equation}\label{auxc2}
  \int_\Omega y_\eps y^{1/2}\na \Psi(n_\eps)\cdot
	\big(y^{1/2}\na \Psi(n_\eps) - \xi\big)dx \to 0.
\end{equation}
Subtracting \eqref{auxc1} from \eqref{auxc2}, we conclude that
$$
  \int_\Omega y_\eps|y^{1/2}\na \Psi(n_\eps)-\xi|^2 dx \to 0.
$$
Taking into account this convergence and \eqref{auxyy}, it follows again by 
the dominated convergence theorem that
\begin{align*}
  \int_\Omega  yy_\eps|\na \Psi(n_\eps)|^2 dx
	&= \int_\Omega y_\eps|y^{1/2}\na \Psi(n_\eps)-\xi|^2dx \\
	&\phantom{xx}{}+2\int_\Omega y_\eps\big(y^{1/2}\na \Psi(n_\eps)-\xi\big)\cdot 
	\xi dx + \int_\Omega y_\eps|\xi|^2dx \to \int_\Omega y|\xi|^2 dx.
\end{align*}
This shows the first part of the proposition. 

{\em Step 6.}
It remains to show that the limit $n$ solves \eqref{auxeq2}. 
Let $\phi\in W^{1,\infty}(\Omega)$. Since (a subsequence of) $(n_\eps)$
converges weakly* to $n$ in $L^\infty(\Omega)$, we have
$$
  \frac{1}{\dt}\int_\Omega(n_\eps-\overline{n})\phi dx\to
	\frac{1}{\dt}\int_\Omega(n-\overline{n})\phi dx.
$$
Furthermore,
\begin{align*}
  \int_\Omega y_\eps\na\Psi(n_\eps)\cdot\na\phi dx
	&= \int_\Omega y^{1/2}y_\eps^{1/2}\na\Psi(n_\eps)\cdot\na\phi dx \\
	&\phantom{xx}{}
	+ \int_\Omega (y^{1/2}-y_\eps^{1/2})y_\eps^{1/2}\na\Psi(n_\eps)\cdot\na\phi dx.
\end{align*}
By Step 5, the first integral converges to 
$$
  \int_\Omega\big(\na(y\Psi(n))-\Psi(n)\na y\big)\cdot\na\phi dx,
$$
while the second integral converges to zero since
\begin{align*}
  \bigg|\int_\Omega & (y^{1/2}-y_\eps^{1/2})y_\eps^{1/2}\na\Psi(n_\eps)\cdot\na\phi dx
	\bigg| \\
	&\le \|y_\eps^{1/2}\na\Psi(n_\eps)\|_{L^2(\Omega)}
	\|y^{1/2}-y_\eps^{1/2}\|_{L^2(\Omega)}\|\na\phi\|_{L^\infty(\Omega)}\to 0.
\end{align*}
We conclude that \eqref{auxeq2} holds in the weak sense for test functions
in $W^{1,\infty}(\Omega)$ but a density argument shows that it is sufficient
to take test functions in $H^1(\Omega)$. This finishes the proof.
\end{proof}


\section{Existence of solutions to the high-temperature model}\label{sec.ex}

We prove the existence of weak solutions to \eqref{2.eq} in $\T^d$. 
We recall the definition of the total (``reverted'') energy
\begin{equation}\label{ex.wtot}
  W_{\rm tot}^k = W^k - \frac{U}{2}(n^k)^2
\end{equation}
and introduce the total variance
\begin{equation}\label{var}
  V^k := \int_{\T^d}\bigg((W^k)^2 - \int_{\T^d}W^kdz\bigg)^2dx
	+ U\int_{\T^d}W^{k-1}dx\int_{\T^d}\bigg(n^k - \int_{\T^d}n^kdz\bigg)^2 dx.
\end{equation}
The main result is as follows.

\begin{theorem}[Existence of weak solutions]\label{thm.ex}
Let $\dt>0$, $U>0$, $\eta\in(0,1]$, $0<\delta<1/(1+\eta)$ and let 
$$
  n^{k-1},\ W^{k-1}\in L^\infty(\T^d), \quad
  \delta \le n^{k-1}\le\frac{1-\delta}{\eta}, \quad W^{k-1}\ge 0\mbox{ in }\T^d.
$$
Then there exists a weak solution $(n^k,W^k)$ to \eqref{1.appn}-\eqref{1.appw} 
in the following sense: It holds $\delta\le n^k\le\|n^{k-1}\|_{L^\infty(\T^d)}
\le (1-\delta)/\eta$, $0\le W^k\le\|W^{k-1}\|_{L^\infty(\T^d)}$ in $\T^d$,
$W^kn^k$, $W^k\in H^1(\T^d)$, as well as
\begin{align}
  \frac{1}{\dt}\int_{\T^d}(n^k-n^{k-1})\phi_0 dx
  &= -\int_\Omega\frac{\na(W^kn^k)-n^k\na W^k}{g(n^k)}
	\cdot\na\phi_0 dx, \label{1.ntau} \\
	\frac{1}{\dt}\int_{\T^d}(W^k-W^{k-1})W^k\phi_1 dx
  &= -\frac{2d-1}{2d}\int_{\T^d}\frac{\na W^k\cdot\na(W^k\phi_1)}{g(n^k)}dx
	\label{1.wtau} \\
  &\phantom{xx}{}
	- U\int_{\T^d}\frac{|\na (W^kn^k)-n^k\na W^k|^2}{g(n^k)} \phi_1 dx \nonumber
\end{align}
for all $\phi_0\in H^1(\T^d)$ and $\phi_1\in H^1(\T^d)\cap L^\infty(\T^d)$,
where $g(n^k)=n^k(1-\eta n^k)$. 
For this solution, the following monotonicity properties hold:
\begin{equation}\label{ex.mono}
  \int_{\T^d}W_{\rm tot}^{k}dx \ge \int_{\T^d}W_{\rm tot}^{k-1} dx, \quad
	V^k + \dt\frac{2d-1}{d}\int_{\T^d}\frac{|\na W^k|^2}{g(n^k)}dx \le V^{k-1},
\end{equation}
where $W_{\rm tot}^k$ and $V^k$ are defined in \eqref{ex.wtot} and
\eqref{var}, respectively. Moreover, if
\begin{equation}\label{ex.assump}
  \frac{U}{2} \int_{\T^d}\bigg(n^{k-1}-\int_{\T^d}n^{k-1}dz\bigg)^2dx
	< \int_{\T^d}W^{k-1}dx
\end{equation}
holds then $W^k\not\equiv 0$.
\end{theorem}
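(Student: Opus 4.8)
The plan is to construct a solution via a two-level approximation and then pass to the limits, using Proposition \ref{prop.conv} for the delicate gradient-convergence step. First I would regularize and truncate: fix a small parameter $\eps>0$, replace $g(n)=n(1-\eta n)$ by something bounded away from zero by adding $\eps$ (or by truncating $n$ into the interval $[\delta,(1-\delta)/\eta]$ before forming $g$), and — crucially — replace the factor $W$ in the diffusion coefficient of the $n$-equation by $W_\eps := W + \eps$ (or by $W$ after a positivity regularization), so that the $n$-equation becomes uniformly elliptic. One must also handle the quadratic gradient term $U W|\na n|^2/g(n)$; I would truncate it, e.g. replace $|\na n|^2$ by $|\na n|^2/(1+\eps|\na n|^2)$, to get an $L^1$ (in fact bounded) right-hand side for the $W$-equation, and test the $W$-equation against $W\phi_1$ as in \eqref{1.wtau} so that the weak formulation is meaningful. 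For this regularized system I would apply the Leray–Schauder fixed-point theorem: given $(\tilde n,\tilde W)$, solve the two decoupled linear elliptic problems for $(n,W)$, establish the a priori bounds (maximum principle giving $\delta\le n\le\|n^{k-1}\|_\infty$ and $0\le W\le\|W^{k-1}\|_\infty$ — using the truncated nonnegative right-hand side and that $W^{k-1}\ge 0$), gain $H^1$ compactness from the energy estimates (test $n$-equation with $\log$-type or with $\Psi(n)$, test $W$-equation with $W$), and conclude existence of a fixed point $(n_\eps,W_\eps)$.

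The heart of the argument is the passage $\eps\to 0$, and the main obstacle is exactly the loss of ellipticity in the $n$-equation as $W\to 0$: one only controls $W_\eps^{1/2}\na n_\eps$ in $L^2$, not $\na n_\eps$, so one cannot directly pass to the limit in the nonlinear term $W_\eps|\na n_\eps|^2/g(n_\eps)$ appearing in the $W$-equation. Here I would invoke Proposition \ref{prop.conv} with $\Psi$ essentially the identity (modified near the endpoints so that $\Psi'>0$ and $y\Psi(n)$ behaves like $Wn$), $y=W_\eps$ (which is strictly positive and converges strongly in $L^2$, weakly in $H^1$ to $W$ by the uniform bounds), and $\overline n = n^{k-1}$. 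The proposition yields a subsequence along which $W^{1/2}W_\eps^{1/2}\na\Psi(n_\eps)\to\na(W\Psi(n))-\Psi(n)\na W$ strongly in $L^2$ and $\mathbf{1}_{\{W>0\}}(n_\eps-n)\to 0$ strongly in $L^2$; in particular, on $\{W>0\}$, $W_\eps^{1/2}\na n_\eps\to W^{1/2}\na n$ strongly in $L^2$, which is precisely what is needed to pass to the limit in $W_\eps|\na n_\eps|^2/g(n_\eps)$ on $\{W>0\}$ (and the term vanishes on $\{W=0\}$ because of the extra factor $W$, after a lower-semicontinuity argument to make the defect nonnegative and then absorbing it). One then identifies all remaining limits, checks $W^kn^k\in H^1$ and $W^k\in H^1$, and recovers the weak formulation \eqref{1.ntau}--\eqref{1.wtau}.

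For the monotonicity relations \eqref{ex.mono}, I would work at the regularized level and pass to the limit. The energy balance $\int W_{\rm tot}^k\,dx\ge\int W_{\rm tot}^{k-1}\,dx$ comes from testing the $W$-equation with $\phi_1\equiv 1$ and the $n$-equation with $Un^k$ (so that $\pa_t(U(n^k)^2/2)$ appears) and adding: the squared-gradient term cancels, as in the derivation of \eqref{1.wtot}, leaving a nonnegative discrete time-difference term $\frac{U}{2\dt}\int(n^k-n^{k-1})^2\,dx\ge 0$. The variance estimate follows by testing the $W$-equation with $W^k-\int_{\T^d}W^k\,dz$ (to produce $(W^k)^2$ minus its mean, squared) and the $n$-equation with $n^k-\int_{\T^d}n^k\,dz$ weighted by $U\int W^{k-1}$, using the discrete convexity inequality $2a(a-b)\ge a^2-b^2$ to generate the telescoping $V^k\le V^{k-1}$ together with the dissipation term $\dt\frac{2d-1}{d}\int|\na W^k|^2/g(n^k)\,dx$; the quadratic gradient term again cancels against a contribution from the $n$-equation. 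The subtle point is that $n^k$ need not lie in $H^1(\T^d)$, so these test-function computations must be carried out at the $\eps$-level (where $n_\eps\in H^1$) and the inequalities then survive the limit by weak lower semicontinuity of the convex terms.

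Finally, for the last claim, that \eqref{ex.assump} implies $W^k\not\equiv 0$: I argue by contradiction. If $W^k\equiv 0$ a.e., then $W_{\rm tot}^k=-\frac{U}{2}(n^k)^2$, and $n^k$ (which now satisfies $\na(W^kn^k)=0$ trivially) reduces to $n^k=n^{k-1}$ from \eqref{1.ntau} — more carefully, from the first monotonicity relation in \eqref{ex.mono},
\begin{equation*}
  -\frac{U}{2}\int_{\T^d}(n^k)^2\,dx = \int_{\T^d}W_{\rm tot}^k\,dx \ge \int_{\T^d}W_{\rm tot}^{k-1}\,dx = \int_{\T^d}W^{k-1}\,dx - \frac{U}{2}\int_{\T^d}(n^{k-1})^2\,dx,
\end{equation*}
so that $\frac{U}{2}\int_{\T^d}\big((n^{k-1})^2-(n^k)^2\big)\,dx \ge \int_{\T^d}W^{k-1}\,dx$. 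Since $n^k=n^{k-1}$ would be needed to keep mass and the parabolic step consistent when $W^k\equiv0$, combined with Jensen's inequality $\int (n^k)^2\,dx \ge \big(\int n^k\,dz\big)^2 = \big(\int n^{k-1}\,dz\big)^2$ (mass is conserved by \eqref{1.ntau} with $\phi_0\equiv1$), one gets $\frac{U}{2}\int_{\T^d}\big(n^{k-1}-\int_{\T^d}n^{k-1}\,dz\big)^2\,dx \ge \int_{\T^d}W^{k-1}\,dx$, contradicting \eqref{ex.assump}. Thus $W^k\not\equiv0$. The main obstacle throughout remains the gradient-convergence step, which is why Proposition \ref{prop.conv} is isolated as the key tool.
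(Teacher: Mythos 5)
Your proposal follows essentially the same route as the paper: regularize and truncate, solve the approximate problem by Leray--Schauder, apply the maximum principle, and use Proposition \ref{prop.conv} together with the renormalized test functions $W^k\phi_1$ to pass to the degenerate limit, with the energy and variance estimates and the positivity of $\int_{\T^d}W^k\,dx$ under \eqref{ex.assump} obtained exactly as you describe. The only places where the paper is more careful are technical: it keeps the gradient-truncation parameter separate from the ellipticity parameter $\eps$ and removes it first (while strong $H^1$ convergence of the approximate densities is still available from uniform ellipticity), and it upgrades your ``lower semicontinuity plus absorption'' step for the quadratic gradient term to an exact limit via a small auxiliary lemma (a weakly convergent sequence dominated pointwise by a strongly convergent sequence with the same limit converges strongly).
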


\begin{remark}[Comments]\label{rem.comm}\rm
1.\ The existence result holds true for more general functions $g(n)$ under the
assumption that $g(n)$ is strictly positive for $\delta\le n\le(1-\delta)/\eta$.

2.\ One may interpret $W^k$ as a ``renormalized'' solution  since we need test
functions of the form $W^k\phi_1$ in order to avoid vacuum sets $W^k=0$.
Such an idea has been used, for instance, for the compressible quantum 
Navier-Stokes equations to avoid vacuum sets in the particle density \cite{Jue10}.
Test functions of the type $W^k\phi$
allow for the trivial solution $n^k=n^{k-1}$ and $W^k=0$
but assumption \eqref{ex.assump} excludes this situation. It means that no
constant steady state with $W^k=0$ exists if the 
variance of $n^{k-1}$ is small compared to the energy $\int_{\T^d}W^{k-1}dx$.

3.\ The second inequality in \eqref{ex.mono} involves $W^{k-2}$ which 
makes sense when the equations are solved iteratively, starting from $k=1$.	
Also \eqref{ex.assump} can be iterated. Indeed, if \eqref{ex.assump}
holds for $(n^{k-1},W^{k-1})$, the monotonicity property \eqref{ex.mono} and
mass conservation $\int_{\T^d}n^k dx=\int_{\T^d}n^{k-1}dx$ imply that
\begin{align*}
  \frac{U}{2} & \int_{\T^d}\bigg(n^{k} - \int_{\T^d}n^{k}dz\bigg)^2
	= \frac{U}{2}\int_{\T^d}(n^k)^2 dx - \frac{U}{2}\bigg(\int_{\T^d}n^k dx\bigg)^2 \\
	&\le \int_{\T^d}(W^k-W^{k-1})dx + \frac{U}{2}\int_{\T^d}(n^{k-1})^2 dx
	- \frac{U}{2}\bigg(\int_{\T^d}n^{k-1} dx\bigg)^2 
	< \int_{\T^d} W^k dx.
\end{align*}

4.\ We are not able to perform the limit $\dt\to 0$. The reason is that
we cannot perform the limit in the quadratic gradient term 
$|\na (W^kn^k)-n^k\na W^k|^2$, since we cannot prove the strong convergence of 
$\na (W^kn^k)-n^k\na W^k$.
Proposition \ref{prop.conv} provides such a result for the time-discrete
elliptic case. The key step is to show that
\begin{align*}
  \int_\Omega \frac{\na(W^kn^k)-n^k\na W^k}{g(n^k)}
  &\cdot\na(Wn-W^kn^k) dx \\
  &= \frac{1}{\dt}\big\langle n^k-n^{k-1},W^kn^k-Wn\big\rangle \to 0,
\end{align*}
where $n$, $W$ are the (weak) limits of $(n^k)$, $(W^k)$, respectively, 
and $\langle\cdot,\cdot\rangle$
is the dual product between $H^1(\T^d)'$ and $H^1(\T^d)$. It is
possible to show that $\dt^{-1}(n^k-n^{k-1})$ is bounded
in $H^1(\T^d)'$, but the limit $W^kn^k-Wn\to 0$
strongly in $H^1(\T^d)$ (more precisely: the limit of the piecewise constant in
time construction of $W^kn^k$ in $L^2(0,T;H^1(\T^d))$) cannot be expected.
\qed
\end{remark}

In the one-dimensional case and under the smallness condition \eqref{assump2}
below, we can show that $W^k$ is positive, which allows us to define
the weak solution to \eqref{1.appn}-\eqref{1.appw} in the standard sense
(with test functions $\phi_1$ instead of $W^k\phi_1$). We set
$$
  \overline{W^{k-1}} = \int_{\T}W^{k-1}dx, \quad
	\overline{n^{k-1}} = \int_{\T}n^{k-1}dx.
$$
\begin{theorem}[One-dimensional case]\label{thm.ex2}
Let the assumptions of Theorem \ref{thm.ex} hold, let $d=1$,
$G=\max_{\delta\le s\le \|n^{k-1}\|_{L^\infty(\T)}}g(s)$, 
and let $(n^k,W^k)$ for $k\ge 0$ be the solution
given by Theorem \ref{thm.ex}. We assume that
\begin{equation}\label{assump2}
  \frac{G}{\dt}\big\|W^{k-1}-\overline {W^{k-1}}\big\|_{L^2(\T)}^2 
	+ U\bigg(\frac{G\overline {W^{k-2}}}{\dt}+\frac12\bigg)
	\big\|n^{k-1}-\overline {n^{k-1}}\big\|_{L^2(\T)}^2 < \overline {W^{k-1}},
\end{equation}
Then $W^k$ is strictly positive, $n^k\in H^1(\T)$, and 
\eqref{1.appn}-\eqref{1.appw} hold in the sense of $H^1(\T)'$. 
\end{theorem}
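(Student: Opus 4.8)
The plan is to leverage Theorem~\ref{thm.ex} together with the one-dimensional Sobolev embedding $H^1(\T)\hookrightarrow L^\infty(\T)\hookrightarrow C(\T)$, so that $W^k$ is continuous and its minimum is attained. First I would recall from Theorem~\ref{thm.ex} that $W^kn^k$ and $W^k$ lie in $H^1(\T)$, $\delta\le n^k\le\|n^{k-1}\|_{L^\infty(\T)}$, and $0\le W^k\le\|W^{k-1}\|_{L^\infty(\T)}$; in particular $W^k\in C(\T)$ and, since $n^k$ is bounded below by $\delta>0$, we have $n^k=(W^kn^k)/W^k\in H^1(\T)$ \emph{on the set $\{W^k>0\}$}, with the quotient rule giving $\na n^k=(\na(W^kn^k)-n^k\na W^k)/W^k$ there. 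So the whole game is to prove the strict positivity $\min_\T W^k>0$; once that is in hand, $W^k$ is bounded away from zero, $n^k\in H^1(\T)$ globally, $g(n^k)=n^k(1-\eta n^k)$ is bounded away from zero, and one may divide the renormalized test functions $W^k\phi_1$ by $W^k$ (using that $\phi_1\in H^1(\T)\cap L^\infty(\T)$ implies $\phi_1/1$... more precisely, given $\psi\in H^1(\T)$ one uses $\phi_1=\psi$ directly after absorbing the $W^k$ factor), recovering \eqref{1.appn}-\eqref{1.appw} in the sense of $H^1(\T)'$.

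The core estimate is the second monotonicity inequality in \eqref{ex.mono}, which iterated from $k=0$ and combined with the smallness hypothesis \eqref{assump2} should force the variance-type quantity controlling $W^k$ to stay small, hence $W^k$ cannot touch zero. Concretely, I would test the discrete energy equation \eqref{1.wtau} with a cleverly chosen $\phi_1$ — the natural candidate is $\phi_1=(W^k-m)_-/W^k$ type truncations, or rather one argues by contradiction: suppose $W^k(x_0)=0$ at some point $x_0\in\T$ (it is a minimum, so $\na W^k(x_0)=0$ in a suitable sense). Then I want to bound $\overline{W^{k-1}}=\int_\T W^{k-1}dx$ from above by quantities involving $\|W^{k-1}-\overline{W^{k-1}}\|_{L^2}^2$ and $\|n^{k-1}-\overline{n^{k-1}}\|_{L^2}^2$, contradicting \eqref{assump2}. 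The bridge is the following chain: from $W^k(x_0)=0$ and $W^k\ge0$ one gets, via the fundamental theorem of calculus in 1D, $\overline{W^k}=\int_\T W^k dx = \int_\T (W^k-W^k(x_0))dx \le \frac12\|\na W^k\|_{L^1(\T)}\le C\|(\na W^k)/\sqrt{g(n^k)}\|_{L^2}\cdot\sqrt{G}$ (using $g(n^k)\le G$), and this $L^2$ norm of $\na W^k/\sqrt{g}$ is exactly what \eqref{ex.mono} controls by $\dt^{-1}(V^{k-1}-V^k)$; similarly \eqref{var} and mass conservation relate $V^{k-1}$ to $\|W^{k-1}-\overline{W^{k-1}}\|_{L^2}^2$ and $\overline{W^{k-2}}\|n^{k-1}-\overline{n^{k-1}}\|_{L^2}^2$. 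Meanwhile assumption \eqref{ex.assump} (which follows iteratively from \eqref{assump2}, or is implied by it since \eqref{assump2} is strictly stronger) already yields $W^k\not\equiv0$, so $\overline{W^k}>0$; the point is to make this quantitative.

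I expect the main obstacle to be the rigorous handling of $\na W^k$ at the minimum point and the bookkeeping of the iterated variance bound. Writing $\overline{W^k}\le\frac12\int_\T|\na W^k|\,dx$ uses that $W^k$ vanishes somewhere and is $W^{1,1}(\T)$ with periodic boundary values — fine — but one must then carefully track the constant $G$ and the factor $\dt$ through \eqref{ex.mono} and \eqref{var}, showing that the right-hand side of the resulting inequality for $\overline{W^{k-1}}$ is precisely the left-hand side of \eqref{assump2} (with the $+\tfrac12$ term coming from the $-\tfrac U2 n^2$ part of $W_{\rm tot}$ appearing when one expresses $\overline{W^k}$ in terms of $\overline{W_{\rm tot}^k}$ and $\int(n^k)^2$). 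A secondary but genuine difficulty is justifying the passage from the renormalized formulation to the standard one: one needs $1/W^k\in L^\infty(\T)$ and that multiplication by $1/W^k$ maps $H^1(\T)\cap L^\infty$ into itself, which holds once $W^k$ is bounded below and $W^k\in H^1\cap C(\T)$, since then $\na(1/W^k)=-\na W^k/(W^k)^2\in L^2(\T)$. The rest — the $H^1(\T)'$ bounds on $\dt^{-1}(n^k-n^{k-1})$ and $\dt^{-1}((W^k)^2-(W^{k-1})^2)$ — is standard given the gradient bounds from Theorem~\ref{thm.ex}.
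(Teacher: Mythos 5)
Your plan is essentially the paper's own proof: the paper likewise combines the embedding $H^1(\T)\hookrightarrow C(\T)$, the fundamental theorem of calculus in one dimension (started at a mean-value point $x_0$ with $W^k(x_0)=\overline{W^k}$, giving a direct pointwise lower bound rather than your contradiction at a zero of $W^k$ — a purely cosmetic difference), the variance monotonicity in \eqref{ex.mono} to control $\int_\T|\pa_x W^k|^2dx\le \frac{G}{\dt}V^{k-1}$, the energy estimate to bound $\overline{W^k}$ from below by $\overline{W^{k-1}}-\frac U2\|n^{k-1}-\overline{n^{k-1}}\|_{L^2(\T)}^2$, and finally the test function $\phi/W^k$ in \eqref{1.wtau} to return to the standard weak formulation. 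The one point where your bookkeeping would not ``land exactly'' on \eqref{assump2} is the step you yourself flag: your Cauchy--Schwarz bound $\int_\T|\pa_x W^k|\,dx\le\sqrt{G}\,\big(\int_\T|\pa_x W^k|^2/g(n^k)\,dx\big)^{1/2}$ produces $\sqrt{GV^{k-1}/\dt}$, i.e.\ a smallness condition involving the square root of the variance, whereas \eqref{assump2} is linear in $V^{k-1}$; the paper instead estimates $\int_\T|\pa_x W^k|\,dz$ directly by $\int_\T|\pa_x W^k|^2\,dz$ on the unit-measure torus, which is what makes the right-hand side of the final inequality coincide verbatim with the left-hand side of \eqref{assump2}. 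So either adopt that cruder bound or accept a square-root variant of the hypothesis; apart from this normalization of the smallness condition, your argument is the intended one.
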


We proceed to the proof of Theorems \ref{thm.ex} and \ref{thm.ex2}.
In this section, $\eps$ denotes a positive parameter and not the band energy.
Since we are not concerned with the kinetic equations, no notational confusion
will occur. Let $\dt>0$, $\alpha$, $\gamma$, $\delta$, $\eps>0$
satisfying $\gamma<1$ and $\delta<1/(1+\eta)$.
Define the truncations
$$
  [n]_\delta = \max\big\{\delta,\min\{(1-\delta)/\eta,n\}\big\}, \quad
	[W]_\gamma = \max\big\{0,\min\{1/\gamma,W\}\big\},
$$
and $g_\delta(s)=[s]_\delta(1-\eta[s]_\delta)$ for $s\in\R$. Then
$g_\delta$ is continuous and strictly positive.
Given $W^{k-1}$, $n^{k-1}\in L^\infty(\T^d)$ satisfying 
$\delta\le n^{k-1}\le (1-\delta)/\eta$, we solve the regularized and
truncated nonlinear problem in $\T^d$
\begin{align}
  \frac{1}{\dt}(n^k-n^{k-1}) 
	&= \diver\bigg(\frac{[W^k]_\gamma+\eps}{g_\delta(n^k)}
	\na n^k\bigg), \label{ex.appn} \\
	\frac{1}{\dt}(W^k-W^{k-1}) 
	&= \frac{2d-1}{2d}\diver\bigg(\frac{\na W^k}{g_\delta(n^k)}\bigg)
	- U\frac{[W^k]_\gamma}{g_\delta(n^k)}\frac{|\na n^k|^2}{1+\alpha|\na n^k|^2}.
  \label{ex.appw}
\end{align}
	
\begin{remark}\rm
Let us explain the approximation \eqref{ex.appn}-\eqref{ex.appw}. 
The truncation of $W^k$ with parameter $\gamma$ ensures that the coefficients 
are bounded, while the truncation $g_\delta(n^k)$ with parameter $\delta>0$ 
guarantees that the denominator is always positive.
The regularization parameter $\eps$ gives strict ellipticity for \eqref{ex.appn},
since generally the first term on the right-hand side of \eqref{ex.appn} 
without $\eps$ is degenerate. 
Finally, the approximation of the quadratic gradient term with parameter $\alpha$ 
avoids regularity issues since it holds $|\na n^k|^2\in L^1(\T^d)$ only.
\qed
\end{remark}

\subsection{Solution of an approximated problem}
	
First, we prove the existence of solutions to \eqref{ex.appn}-\eqref{ex.appw}.
	
\begin{lemma}[Existence for the approximated problem]
There exists a weak solution $(n^k,$ $W^k)\in H^1(\T^d)^2$ to 
\eqref{ex.appn}-\eqref{ex.appw}.
\end{lemma}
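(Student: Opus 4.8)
The plan is to apply the Leray--Schauder fixed-point theorem. First I would set up the fixed-point map: given $(\bar n,\bar W)\in L^2(\T^d)^2$ and a parameter $\sigma\in[0,1]$, solve the two \emph{linear} elliptic problems
\begin{align*}
  \frac1\dt(n-\sigma n^{k-1}) &= \diver\bigg(\frac{[\bar W]_\gamma+\eps}{g_\delta(\bar n)}\na n\bigg), \\
	\frac1\dt(W-\sigma W^{k-1}) &= \frac{2d-1}{2d}\diver\bigg(\frac{\na W}{g_\delta(\bar n)}\bigg)
	- \sigma U\frac{[\bar W]_\gamma}{g_\delta(\bar n)}\frac{|\na\bar n|^2}{1+\alpha|\na\bar n|^2},
\end{align*}
and define $(n,W)=:S(\bar n,\bar W,\sigma)$. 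Since $g_\delta$ is continuous and bounded away from $0$ and from above, and $[\,\cdot\,]_\gamma$ is bounded, the coefficient $([\bar W]_\gamma+\eps)/g_\delta(\bar n)$ is bounded between two positive constants, so each equation is uniformly elliptic with bounded coefficients; the right-hand side source term in the $W$-equation lies in $L^\infty(\T^d)$ because $\alpha|\na\bar n|^2/(1+\alpha|\na\bar n|^2)\le 1$. By Lax--Milgram (using the $H^1(\T^d)$ bilinear form plus the zeroth-order term $\frac1\dt\int n\phi$, which makes it coercive on all of $H^1(\T^d)$, periodic boundary conditions causing no trouble) each problem has a unique solution in $H^1(\T^d)$, so $S$ is well defined.

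Next I would verify the hypotheses of Leray--Schauder. Continuity and compactness of $S$: if $(\bar n_j,\bar W_j)\to(\bar n,\bar W)$ in $L^2(\T^d)^2$, then $g_\delta(\bar n_j)\to g_\delta(\bar n)$ and $[\bar W_j]_\gamma\to[\bar W]_\gamma$ strongly (dominated convergence, using the Lipschitz continuity of the truncations and of $g_\delta$), the source term $|\na\bar n_j|^2/(1+\alpha|\na\bar n_j|^2)$ need not converge — but this does not matter, because after extracting a subsequence with $\na\bar n_j\to\na\bar n$ a.e.\ (if $\bar n_j$ were in $H^1$) \emph{or} more simply by noting $\bar n_j$ ranges over the whole space $L^2$, I should instead argue: the map factors as $(\bar n,\bar W)\mapsto(\text{coefficients})\mapsto(n,W)$, and solutions of the linear problems depend continuously on the $L^\infty$ coefficients and the $L^2$ source; the source term is a continuous function of $\bar n$ \emph{in $H^1$}, so I restrict the domain of the fixed-point map to a bounded ball of $H^1(\T^d)^2$ mapped compactly into $L^2(\T^d)^2$ — actually the cleanest route is: $S$ maps $L^2(\T^d)^2$ into a bounded subset of $H^1(\T^d)^2$ (the a priori estimates below), hence compactly into $L^2(\T^d)^2$; and $S(\cdot,\cdot,\sigma)$ is continuous because testing the difference of two solutions by the difference and using the $L^\infty$-closeness of the coefficients gives an $H^1$ bound on the difference controlled by the data. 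Also $S(\bar n,\bar W,0)=0$.

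Then the crucial a priori bound: any fixed point $(n,W)=S(n,W,\sigma)$ satisfies a $\sigma$-independent bound in $H^1(\T^d)^2$. Testing the $n$-equation with $n$ gives, since the coefficient is $\ge\eps/\max g_\delta$,
$$
  \frac1\dt\|n\|_{L^2(\T^d)}^2 + \frac{\eps}{\max g_\delta}\|\na n\|_{L^2(\T^d)}^2
	\le \frac\sigma\dt\|n^{k-1}\|_{L^2(\T^d)}\|n\|_{L^2(\T^d)},
$$
hence an $H^1$ bound on $n$ depending on $\eps,\dt,\delta$ but not $\sigma$. Testing the $W$-equation with $W$, the quadratic-gradient source contributes $-\sigma U\int [W]_\gamma g_\delta(n)^{-1}(\,\cdot\,)W\,dx$, bounded by $(U/\gamma\delta(1-\eta-\delta))\|W\|_{L^1}$ hence by $C\|W\|_{L^2}$, and the diffusion term is coercive because $g_\delta\ge\delta(1-\eta-\delta)>0$; this yields an $H^1$ bound on $W$. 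With these bounds Leray--Schauder (Schaefer's fixed-point theorem form) gives a fixed point at $\sigma=1$, which is the desired weak solution $(n^k,W^k)\in H^1(\T^d)^2$ of \eqref{ex.appn}--\eqref{ex.appw}.

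The main obstacle I anticipate is the continuity/compactness verification for the map $S$ at the level of the quadratic gradient term: plain $L^2$-convergence of $\bar n_j$ does not control $|\na\bar n_j|^2$, so one must either restrict the fixed-point argument to $H^1$ and exploit that a fixed point automatically lies in $H^1$ with a uniform bound (so the relevant domain is a ball in $H^1$, compactly embedded in $L^2$, on which $\bar n_j\to\bar n$ strongly in $L^2$ plus weak-$H^1$ convergence suffices to pass to the limit in the \emph{bounded} factor $|\na\bar n_j|^2/(1+\alpha|\na\bar n_j|^2)$ only after extracting an a.e.-convergent subsequence — which is fine since we only need sequential continuity into $L^2$), or phrase the whole scheme with $S$ acting on $H^1(\T^d)^2$ from the start. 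Everything else is a routine uniformly-elliptic linear theory plus Schaefer's theorem.
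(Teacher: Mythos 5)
Your overall strategy (linearize, solve by Lax--Milgram, get $\sigma$-independent a priori bounds, apply Leray--Schauder) is the same as the paper's, and your a priori estimates are essentially correct. But there is a genuine gap in the construction of the fixed-point map itself: you place $|\na\bar n|^2/(1+\alpha|\na\bar n|^2)$ with the \emph{input} $\bar n$ in the source of the $W$-equation. This term is not defined for $\bar n\in L^2(\T^d)$, and none of the repairs you sketch actually closes the argument. If you work on a ball of $H^1$ equipped with the $L^2$ topology, then $\bar n_j\to\bar n$ in $L^2$ with an $H^1$ bound only gives $\na\bar n_j\rightharpoonup\na\bar n$ weakly in $L^2$; you cannot extract an a.e.\ convergent subsequence of the gradients from weak convergence, and the bounded nonlinearity $t\mapsto|t|^2/(1+\alpha|t|^2)$ is not weakly continuous, so continuity of $S$ fails. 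If instead you take $H^1(\T^d)^2$ with its own topology as the underlying space, compactness of $S$ fails, since the linear problems (divergence form, merely $L^\infty$ coefficients) give only an $H^1$ bound on the output, not precompactness in $H^1$.

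The paper avoids this entirely by a small but essential change in the coupling: the two linear problems are solved \emph{sequentially}, and the quadratic source in the $W$-equation uses $\na n$ where $n$ is the \emph{output} of the first linear solve, i.e.
$$
  F_1(\phi_1) = \frac{\theta}{\dt}\int_{\T^d}W^{k-1}\phi_1\,dx
  - \theta U\int_{\T^d}\frac{[W^*]_\gamma}{g_\delta(n^*)}\,
  \frac{|\na n|^2}{1+\alpha|\na n|^2}\,\phi_1\,dx,
$$
with $n$ determined by $a_0(n,\cdot)=F_0(\cdot)$. Since $n$ always lies in $H^1(\T^d)$ and depends continuously (in $H^1$) on the $L^\infty$-bounded coefficients built from $(n^*,W^*)\in L^2(\T^d)^2$, the map $S$ is well defined, continuous, and compact on $L^2(\T^d)^2$, and at a fixed point one recovers exactly \eqref{ex.appn}--\eqref{ex.appw}. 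With this modification the rest of your argument goes through.
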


\begin{proof}
We define the fixed-point operator $S:L^2(\T^d)^2\times[0,1]\to L^2(\T^d)^2$
by $S(n^*,W^*;\theta)=(n,W)$, where $(n,W)\in H^1(\T^d)^2$ 
is the unique solution to the linear problem
\begin{equation}\label{ex.LM}
  a_0(n,\phi_0) = F_0(\phi_0), \quad a_1(W,\phi_1)=F_1(\phi_1)
	\quad\mbox{for all }\phi_0,\,\phi_1\in H^1(\T^d),
\end{equation}
where 
\begin{align*}
  a_0(n,\phi_0) &= \int_{\T^d}\frac{[W^*]_\gamma+\eps}{g_\delta(n^*)}s
	\na n\cdot\na\phi_0 dx + \frac{1}{\dt}\int_{\T^d}n\phi_0 dx, \\
	F_0(\phi_0) &= \frac{\theta}{\dt}\int_{\T^d} n^{k-1}\phi_0 dx, \\
	a_1(W,\phi_1) &= \frac{2d-1}{2d}\int_{\T^d}
	\frac{\na W\cdot\na\phi_1}{g_\delta(n^*)}dx 
	+ \frac{1}{\dt}\int_{\T^d}W\phi_1 dx, \\
	F_1(\phi_1) &= \frac{\theta}{\dt}\int_{\T^d}W^{k-1}\phi_1 dx
	- \theta U\int_{\T^d}\frac{[W^*]_\gamma}{g_\delta(n^*)}
	\frac{|\na n|^2}{1+\alpha|\na n|^2}\phi_1 dx.
\end{align*}
The approximation and truncation ensure that these forms are bounded on
$H^1(\T^d)$. The bilinear forms $a_0$ and $a_1$ are coercive. By the
Lax-Milgram lemma, there exists a unique solution $(n,W)\in H^1(\T^d)^2$
to \eqref{ex.LM}. Thus, the fixed-point operator is well defined (and
has compact range). Furthermore, $S(n^*,W^*;0)=0$. 
Standard arguments show that $S$ is continuous. Let $(n,W)$ be a fixed point
of $S(\cdot,\cdot;\theta)$, i.e., $(n,W)$ solves \eqref{ex.appn}-\eqref{ex.appw}
with $(n^k,W^k)$ replaced by $(n,W)$. With the test functions $\phi_0=n$ and
$\phi_1=W$ and the inequality 
$$
  \bigg(\frac{1}{\dt}n - \frac{\theta}{\dt}n^{k-1}\bigg)n
	\ge \frac{1}{2\dt}\big(n^2 - (n^{k-1})^2\big),
$$
we find that
\begin{align*}
  \frac{1}{2\dt}\int_{\T^d}n^2 dx 
	+ \int_{\T^d}\frac{[W]_\gamma+\eps}{g_\delta(n)}|\na n|^2 dx
	&\le \frac{1}{2\dt}\int_{\T^d}(n^{k-1})^2 dx, \\
	\frac{1}{2\dt}\int_{\T^d}W^2 dx
	+ \frac{2d-1}{2d}\int_{\T^d}\frac{|\na W|^2}{g_\delta(n)}dx
	&\le \frac{1}{2\dt}\int_{\T^d}(W^{k-1})^2 dx \\
	&\phantom{xx}{}
	- \theta U\int_{\T^d}\frac{[W]_\gamma W}{g_\delta(n)}
	\frac{|\na n|^2}{1+\alpha|\na n|^2}dx.
\end{align*}
The last integral is nonnegative since $[W]_\gamma W\ge 0$. Therefore,
$$
  \|n\|_{H^1(\T^d)} \le C(\eps), \quad
	\|W\|_{H^1(\T^d)} \le C(\delta),
$$
where $C(\eps)$ and $C(\delta)$ are positive constants independent
of $(n,W)$. This provides the necessary uniform bound for all fixed points
of $S(\cdot,\cdot;\theta)$. 
We can apply the Leray-Schauder fixed-point theorem to infer the
existence of a fixed point for $S(\cdot,\cdot;1)$, i.e.\ of a weak
solution to \eqref{ex.appn}-\eqref{ex.appw}.
\end{proof}

\subsection{Removing the truncation}

The following maximum principle holds.

\begin{lemma}[Maximum principle]
Let $(n^k,W^k)$ be a weak solution to \eqref{ex.appn}-\eqref{ex.appw}. Then
$$
  \delta\le n^k\le \|n^{k-1}\|_{L^\infty(\T^d)}\le \frac{1-\delta}{\eta}, 
	\quad 0\le W^k\le \frac{1}{\gamma}\quad\mbox{in }\T^d,
$$
where $\gamma\le 1/\|W^{k-1}\|_{L^\infty(\T^d)}$.
\end{lemma}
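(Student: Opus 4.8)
The plan is to establish the four pointwise bounds by Stampacchia-type truncation arguments: in each case I insert the positive or negative part of a suitably shifted quantity as a test function in the weak form of \eqref{ex.appn} or \eqref{ex.appw}, observe that the diffusive and Joule contributions have a definite sign, and absorb the discrete time derivative via an elementary convexity inequality. The bounds are in fact mutually independent — the $n^k$-equation sees $W^k$ only through the manifestly positive factor $[W^k]_\gamma+\eps$, and the $W^k$-equation sees $n^k$ only through $g_\delta(n^k)^{-1}$, which is bounded by construction of $g_\delta$ — so I would simply treat $n^k$ first.

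For $n^k\ge\delta$ I would test \eqref{ex.appn} with $\phi_0=(n^k-\delta)_-\in H^1(\T^d)$: the diffusion term equals $-\int_{\T^d}([W^k]_\gamma+\eps)g_\delta(n^k)^{-1}|\na(n^k-\delta)_-|^2\,dx\le0$, and since $n^{k-1}\ge\delta$ one has $(n^k-n^{k-1})(n^k-\delta)_-\ge(n^k-\delta)_-^2$ a.e., forcing $(n^k-\delta)_-\equiv0$. Symmetrically, with $\phi_0=(n^k-M)_+$, $M=\|n^{k-1}\|_{L^\infty(\T^d)}$, and $n^{k-1}\le M$, the diffusion term is again $\le0$ while $(n^k-n^{k-1})(n^k-M)_+\ge(n^k-M)_+^2$, so $n^k\le M$. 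The bound $M\le(1-\delta)/\eta$ is inherited from the standing hypothesis $n^{k-1}\le(1-\delta)/\eta$, and on $\delta\le n^k\le(1-\delta)/\eta$ the truncation in $g_\delta$ is inactive, i.e.\ $g_\delta(n^k)=n^k(1-\eta n^k)$.

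For $W^k\ge0$ I would test \eqref{ex.appw} with $\phi_1=(W^k)_-\in H^1(\T^d)$: the diffusion term is $-\tfrac{2d-1}{2d}\int_{\T^d}g_\delta(n^k)^{-1}|\na(W^k)_-|^2\,dx\le0$, and the Joule term vanishes on $\{W^k<0\}$ because $[W^k]_\gamma=0$ there; with $W^{k-1}\ge0$ this gives $(W^k)_-\equiv0$. For $W^k\le1/\gamma$ I would test with $\phi_1=(W^k-1/\gamma)_+\in H^1(\T^d)$: the diffusion term is again $\le0$, the Joule term is $-U\int_{\T^d}[W^k]_\gamma g_\delta(n^k)^{-1}|\na n^k|^2(1+\alpha|\na n^k|^2)^{-1}(W^k-1/\gamma)_+\,dx\le0$ since $[W^k]_\gamma\ge0$ and the remaining factors are nonnegative, and the hypothesis $\gamma\le1/\|W^{k-1}\|_{L^\infty(\T^d)}$ (i.e.\ $W^{k-1}\le1/\gamma$) yields $(W^k-W^{k-1})(W^k-1/\gamma)_+\ge(W^k-1/\gamma)_+^2$; hence $(W^k-1/\gamma)_+\equiv0$.

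I do not expect a genuine obstacle here: this is the standard implicit-Euler maximum principle. The points that need care are the admissibility of the truncated test functions — they belong to $H^1(\T^d)$ because $n^k,W^k\in H^1(\T^d)$ and Lipschitz truncations preserve $H^1$, while the $\alpha$-regularized Joule coefficient $[W^k]_\gamma g_\delta(n^k)^{-1}|\na n^k|^2(1+\alpha|\na n^k|^2)^{-1}$ is bounded and hence pairs with any $L^2$ test function (one of the roles of the $\alpha$-regularization) — getting the sign of the Joule term right in the $W^k$ upper-bound step, which uses only $[W^k]_\gamma\ge0$, and the elementary inequality $(a-b)a_-\ge a_-^2$ when $b\ge0$ (resp.\ $(a-b)a_+\ge a_+^2$ when $b\le0$), which absorbs the discrete time derivative in all four steps.
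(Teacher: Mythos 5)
Your proof is correct and follows essentially the same route as the paper: the same four Stampacchia-type truncated test functions, the same sign observations for the diffusion and Joule terms (in particular $[W^k]_\gamma W^k_-\equiv 0$), and the same elementary inequality to absorb the discrete time derivative. The only minor difference is that the paper tests with $(W^k-\|W^{k-1}\|_{L^\infty(\T^d)})_+$ and thereby obtains the slightly sharper bound $W^k\le\|W^{k-1}\|_{L^\infty(\T^d)}$ (used later in Theorem \ref{thm.ex}), whereas your choice $(W^k-1/\gamma)_+$ yields exactly the bound stated in the lemma.
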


\begin{proof}
We choose $(n^k-\delta)_-=\min\{0,n^k-\delta\}$ as a test function in \eqref{ex.appn}:
\begin{align*}
  \frac{1}{\dt}\int_{\T^d} &\big((n^k-\delta)-(n^{k-1}-\delta)\big)(n^k-\delta)_-dx \\
	&{}+ \int_{\T^d}\frac{[W^k]_\gamma+\eps}{g_\delta(n^k)}
	\na n^k\cdot\na(n^k-\delta)_- dx = 0.
\end{align*}
Since $-(n^{k-1}-\delta)(n^k-\delta)_-\ge 0$, this gives
$$
  \frac{1}{\dt}\int_{\T^d}(n^k-\delta)_-^2 dx
	\le -\int_{\T^d}\frac{[W^k]_\gamma+\eps}{g_\delta(n^k)}|\na(n^k-\delta)_-|^2 dx
	\le 0,
$$
and hence, $n^k\ge\delta$ in $\T^d$. In a similar way, the test function
$(n^k-N)_+=\max\{0,n^k-N\}$ with $N:=\|n^{k-1}\|_{L^\infty(\T^d)}$ leads to $n^k-N\le 0$
in $\T^d$. Next, we use $W^k_-\le 0$ as a test function in \eqref{ex.appw}:
\begin{align*}
  \frac{1}{\dt}\int_{\T^d} & (W_-^k)^2 dx 
	+ \frac{2d-1}{2d}\int_{\T^d}\frac{|\na W_-|^2}{g_\delta(n^k)}dx \\
  &= \frac{1}{\dt}\int_{\T^d}W^{k-1}W_-^k dx 
	- U\int_{\T^d}\frac{[W^k]_\gamma W_-^k}{g_\delta(n^k)}
	\frac{|\na n^k|^2}{1+\alpha|\na n^k|^2}dx \le 0.
\end{align*}
We deduce that $W^k\ge 0$. The proof of $W^k\le \|W^{k-1}\|_{L^\infty(\T^d)}
\le 1/\gamma$
is similar, using the test function $(W^k-\|W^{k-1}\|_{L^\infty(\T^d)})_+$.
\end{proof}

We have shown that $(n^k,W^k)$ solves
\begin{align}
  \frac{1}{\dt}(n^k-n^{k-1}) &= \diver\bigg(\frac{W^k+\eps}{g(n^k)}
	\na n^k\bigg), \label{ex.aln} \\
	\frac{1}{\dt}(W^k-W^{k-1}) 
	&= \frac{2d-1}{2d}\diver\bigg(\frac{\na W^k}{g(n^k)}\bigg)
	- U\frac{W^k}{g(n^k)}\frac{|\na n^k|^2}{1+\alpha|\na n^k|^2},
  \label{ex.alw}
\end{align}
where $g(n)=n(1-\eta n)$.

\subsection{The limit $\alpha\to 0$}

Let $(n_\alpha^k,W_\alpha^k)$ be a weak solution to \eqref{ex.aln}-\eqref{ex.alw}.
We use the test function $n^k_\alpha$ in \eqref{ex.aln},
\begin{equation}\label{ex.auxn}
  \frac{1}{2\dt}\int_{\T^d}(n^k_\alpha)^2 dx 
	+ \int_{\T^d}\frac{W^k_\alpha+\eps}{g(n^k_\alpha)}|\na n^k_\alpha|^2 dx
	\le \frac{1}{2\dt}\int_{\T^d}(n^{k-1})^2 dx,
\end{equation}
and the test function $W^k_\alpha$ in \eqref{ex.alw},
\begin{equation}\label{ex.auxw}
  \frac{1}{2\dt}\int_{\T^d}(W^k_\alpha)^2 dx
	+ \frac{2d-1}{2d}\int_{\T^d}\frac{|\na W_\alpha^k|^2}{g(n^k_\alpha)}
	\le \frac{1}{2\dt}\int_{\T^d}(W^{k-1})^2 dx,
\end{equation}
which provides immediately uniform $H^1(\T^d)$ estimates since $g(n^k_\alpha)\ge
C(\delta)>0$:
$$
  \|n^k_\alpha\|_{H^1(\T^d)} \le C(\delta,\eps,\dt), \quad 
	\|W^k_\alpha\|_{H^1(\T^d)} \le C(\delta,\dt),
$$
where the constants are independent of $\alpha$. By compactness, 
this implies the existence of a subsequence which is not relabeled such that,
as $\alpha\to 0$, 
\begin{align*}
  n^k_\alpha\to n^k,\ W^k_\alpha\to W^k &\quad\mbox{strongly in }L^2(\T^d), \\
	n^k_\alpha\rightharpoonup n^k,\ W^k_\alpha\rightharpoonup W^k
	&\quad\mbox{weakly in }H^1(\T^d).
\end{align*}
This shows that, maybe for a subsequence, $W^k_\alpha/g(n^k_\alpha)\to
W^k/g(n^k)$ and $1/g(n^k_\alpha)\to 1/g(n^k)$ a.e.\ in $\T^d$, and by
dominated convergence, strongly in $L^2(\T^d)$.

We claim that $n^k_\alpha\to n^k$ strongly in $H^1(\T^d)$.
Let $y_\alpha:=(W^k_\alpha+\eps)/g(n^k_\alpha)$. Then 
$y_\eps\ge\eps/\sup_{s\in[\delta,N]}g(s)>0$, where $N=\|n^{k-1}\|_{L^\infty(\T^d)}$, 
and $y_\alpha\to y:=W^k/g(n^k)\ge 0$ strongly in $L^2(\Omega)$. Thus,
$y_\alpha^{1/2}\na n^k_\alpha\rightharpoonup y^{1/2}\na n^k$
weakly in $L^2(\Omega)$, and it follows that
$$
  \int_{\T^d}y_\alpha \na n^k\cdot\na(n^k_\alpha-n^k)dx \to 0.
$$
Taking $n^k_\alpha-n^k$ as a test function in \eqref{ex.aln}, we obtain
$$
  \int_{\T^d}y_\alpha\na n^k_\alpha\cdot\na(n^k_\alpha-n^k)dx
	= -\frac{1}{\dt}\int_\Omega(n^k_\alpha-n^{k-1})(n^k_\alpha-n^k)dx \to 0.
$$
Subtraction of these integrals leads to
$$
  \int_{\T^d}y_\alpha|\na(n^k_\alpha-n^k)|^2 dx \to 0.
$$
Since $y_\alpha\ge\eps/\sup_{s\in[\delta,N]}g(s)>0$, this proves the claim. 
In particular, $1/(1+\alpha|\na n^k_\alpha|^2)\to 1$ in $L^2(\T^d)$. From this, 
we can directly deduce that
$$
  \frac{|\na n^k_\alpha|^2}{1+\alpha|\na n^k_\alpha|^2}
	\to |\na n^k|^2 \quad \mbox{in }L^1(\T^d).
$$
The above convergence results are sufficient to pass to the limit
$\alpha\to 0$ in \eqref{ex.aln}-\eqref{ex.alw}, showing that $(n^k,W^k)$
solves
\begin{align}
  \frac{1}{\dt}(n^k-n^{k-1}) &= \diver\bigg(\frac{W^k+\eps}{g(n^k)}
	\na n^k\bigg), \label{ex.epsn} \\
	\frac{1}{\dt}(W^k-W^{k-1}) 
	&= \frac{2d-1}{2d}\diver\bigg(\frac{\na W^k}{g(n^k)}\bigg)
	- U\frac{W^k}{g(n^k)}|\na n^k|^2,
  \label{ex.epsw}
\end{align}

\subsection{The limit $\eps\to 0$}

This limit is the delicate part of the proof. We first state a lemma 
concerning weak and strong convergence.

\begin{lemma}\label{lem.weak.strong}
Let $(f_n)$ be a weakly and $(g_n)$ be a strongly converging sequence in 
$L^2(\Omega)$ which have the same limit. If $|f_n(x)|\leq |g_n(x)|$ for all 
$n\in \mathbb N$ and a.e.\ $x\in\T^d$, then $(f_n)$ converges strongly in
$L^2(\Omega)$.
\end{lemma}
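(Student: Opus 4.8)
The plan is to exploit the pointwise domination $|f_n| \le |g_n|$ together with the common weak/strong limit to control the $L^2$ norm of $f_n$ in the limit. Write $f$ for the common limit. The standard route is via weak lower semicontinuity of the norm combined with an upper bound coming from the domination.

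\medskip

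First I would establish the upper bound. Since $g_n \to f$ strongly in $L^2(\Omega)$, we have $\|g_n\|_{L^2(\Omega)} \to \|f\|_{L^2(\Omega)}$. The pointwise inequality $|f_n| \le |g_n|$ a.e.\ gives $\|f_n\|_{L^2(\Omega)} \le \|g_n\|_{L^2(\Omega)}$ for every $n$, hence $\limsup_{n\to\infty} \|f_n\|_{L^2(\Omega)} \le \|f\|_{L^2(\Omega)}$. On the other hand, weak convergence $f_n \rightharpoonup f$ in $L^2(\Omega)$ implies $\|f\|_{L^2(\Omega)} \le \liminf_{n\to\infty} \|f_n\|_{L^2(\Omega)}$ by weak lower semicontinuity of the norm. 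Combining the two, $\|f_n\|_{L^2(\Omega)} \to \|f\|_{L^2(\Omega)}$.

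\medskip

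Finally I would invoke the Radon--Riesz property of $L^2(\Omega)$ (a Hilbert space is uniformly convex): a sequence that converges weakly and whose norms converge to the norm of the weak limit converges strongly. Concretely,
$$
  \|f_n - f\|_{L^2(\Omega)}^2 = \|f_n\|_{L^2(\Omega)}^2 - 2\langle f_n, f\rangle_{L^2(\Omega)} + \|f\|_{L^2(\Omega)}^2 \to \|f\|_{L^2(\Omega)}^2 - 2\|f\|_{L^2(\Omega)}^2 + \|f\|_{L^2(\Omega)}^2 = 0,
$$
using $\langle f_n, f\rangle \to \langle f, f\rangle = \|f\|_{L^2(\Omega)}^2$ from weak convergence and the norm convergence just proved. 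This gives strong convergence $f_n \to f$ in $L^2(\Omega)$.

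\medskip

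There is essentially no obstacle here: the only subtlety is making sure the hypothesis "$g_n$ strongly convergent with the same limit" is used to pin down both that the norms of $g_n$ converge and that the weak limit of $f_n$ is exactly that limit; once both are in hand the argument is the textbook Radon--Riesz / uniform-convexity trick. One could also note that $L^2(\Omega)$ could be replaced by any uniformly convex space, but for the paper's purposes the Hilbert-space computation above is cleanest.
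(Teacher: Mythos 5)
Your proposal is correct and follows essentially the same route as the paper: domination gives $\limsup\|f_n\|\le\lim\|g_n\|=\|f\|$, weak lower semicontinuity gives the reverse inequality, and norm convergence plus weak convergence yields strong convergence. The only difference is that you spell out the final Hilbert-space identity explicitly, which the paper leaves implicit.
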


\begin{proof}
Let $f$ denote the weak limit of $(f_n)$ and $(g_n)$. Due to the weak lower 
semi-continuity of the norm, 
\begin{align*}
  \int_{\T^d}|f(x)|^2dx &\leq \liminf_{n\to\infty}
  \int_{\T^d}|f_n(x)|^2dx \\&\leq \limsup_{n\to\infty}
  \int_{\T^d}|f_n(x)|^2dx \leq \lim_{n\to\infty}
  \int_{\T^d}|g_n(x)|^2dx = 
  \int_{\T^d}|f(x)|^2dx.
	\end{align*}
Thus, the limes inferior and superior coincide and $\|f_n\|_{L^2(\Omega)}\to
\|f\|_{L^2(\Omega)}$. Together with the weak convergence of $(f_n)$, 
we deduce the strong convergence.
\end{proof}
	
Let $(n^k_\eps,W^k_\eps)$ be a weak solution to \eqref{ex.epsn}-\eqref{ex.epsw}.
Inequalities \eqref{ex.auxn} and \eqref{ex.auxw} show the following bounds
uniform in $\eps$:
\begin{align}
  \|n^k_\eps\|_{L^\infty(\T^d)} 
	+ \|(W^k_\eps+\eps)^{1/2}\na n^k_\eps\|_{L^2(\T^d)}
  &\le C(\delta,\dt), \label{ex.eps1} \\
	\|W^k_\eps\|_{L^\infty(\Omega)} + \|W^k_\eps\|_{H^1(\T^d)}
	&\le C(\delta,\dt). \label{ex.eps2}
\end{align}
By compactness, there exists a subsequence (not relabeled) such that, as $\eps\to 0$,
\begin{align}
 	& n_\eps^k\rightharpoonup^* n^k \quad\mbox{weakly* in }L^\infty(\T^d), 
  \label{ex.neps} \\
  & W^k_\eps\to W^k\quad\mbox{strongly in }L^2(\T^d), \quad
	W^k_\eps\rightharpoonup W^k\quad\mbox{weakly in }H^1(\T^d). \label{ex.weps}
\end{align}

Again, we need strong convergence for $\na n^k_\eps$. Since equation
\eqref{ex.epsn} is degenerate, we obtain a weaker result. 
For this, let $\Psi\in C^2(\R)$ be strictly monotonically increasing and satisfy
$\Psi'(t)=1/g(t)$ for $\delta\leq t\leq (1-\delta)/\eta$. Thus, we can apply 
Proposition \ref{prop.conv} for $y_\eps=W^k_\eps+\eps$ and $y=W^k$
to conclude that, up to a subsequence,
\begin{align}\label{ex.prop}
  & (W^k)^{1/2}(W^k_\eps+\eps)^{1/2}\na \Psi(n_\eps^k)
	\to \na(W^k\Psi(n^k))-\Psi(n^k)\na W^k, \\
	& \mathrm{1}_{\{W^k>0\}}(n_\eps^k-n^k) \to 0 \quad\mbox{strongly in }
	L^2(\Omega). \nonumber
\end{align}
The latter convergence implies that $n^k_\eps-n^k\to 0$ a.e.\
in $\{W^k>0\}$ and, by dominated convergence,
\begin{equation}\label{ex.pos}
\frac{\mathrm{1}_{\{W^k>0\}}}{g(n^k_\eps)} 
\to \frac{\mathrm{1}_{\{W^k>0\}}}{g(n^k)}\quad\mbox{strongly in }L^2(\T^d).
\end{equation}

Now, let $y_\eps=(W^k_\eps+\eps)/g(n^k_\eps)$ and $y=W^k/g(n^k)$. We know that
$y_\eps$, $y\in L^\infty(\T^d)\cap H^1(\T^d)$ and $ y_\eps\to y$ strongly in 
$L^2(\T^d)$. Thus, we can again apply Proposition  \ref{prop.conv} with 
$\Psi=\mathrm{Id}$ and infer that $W^kn^k\in H^1(\T^d)$ as well as 
	\begin{equation*}
	  \frac{1}{\dt}\int_{\T^d}(n^k-n^{k-1})\phi_0 dx
	  +\int_\Omega\frac{\na(W^kn^k)-n^k\na W^k}{g(n^k)}
	  \cdot\na\phi_0 dx=0
	\end{equation*}
for all $\phi_0\in H^1(\T^d)$.

Let $\phi_1$ be a smooth test function. 
We use the test function $W^k\phi_1$ in the weak formulation of \eqref{ex.epsw}:
\begin{align}\label{approx.eq.Wk.eps}
	0 &= \frac{1}{\dt}\int_{\T^d}(W^k_\eps-W^{k-1})W^k\phi_1 dx
	+ \frac{2d-1}{2d}\int_{\T^d}
	\frac{\na W^k_\eps\cdot\na(W^k\phi_1)}{g(n^k_\eps)}dx \\
	&\phantom{xx}{}+ U\int_{\T^d}W^k\frac{W^k_\eps}{g(n^k_\eps)}
	|\na n^k_\eps|^2\phi_1 dx =: I^1_\eps+I^2_\eps+I^3_\eps. \nonumber
\end{align}
We pass to the limit $\eps\to 0$ term by term. By \eqref{ex.weps}, 
$$
  I^1_\eps	\to \frac{1}{\dt}\int_{\T^d}(W^k-W^{k-1})W^k\phi_1 dx.
$$
For the integral $I^2_\eps$, we use the strong convergence \eqref{ex.pos} and
the weak convergence of $(\na W^k_\eps)$ in $L^2(\T^d)$ to infer that
\begin{align*}
  I^2_\eps &= \int_{\T^d}\frac{\mathrm{1}_{\{W^k>0\}}}{g(n^k_\eps)}
	W^k\na W^k_\eps\cdot\na\phi_1 dx
	+ \int_{\T^d}\frac{\mathrm{1}_{\{W^k>0\}}}{g(n^k_\eps)}\na W^k_\eps\cdot\na W^k
	\phi_1 dx \\
	&\to \int_{\T^d}\frac{\mathrm{1}_{\{W^k>0\}}}{g(n^k)}W^k\na W^k\cdot\na\phi_1 dx
	+ \int_{\T^d}\frac{\mathrm{1}_{\{W^k>0\}}}{g(n^k)}|\na W^k|^2\phi_1 dx \\
	&= \int_{\T^d}\frac{\na W^k\cdot\na(W^k\phi_1)}{g(n^k)}dx.
\end{align*}

The remaining integral $I^3_\eps$ requires some work.
As a preparation, using Proposition \ref{prop.conv},
we infer similarly to \eqref{ex.prop} that
$$
	h_\eps :=(W^k)^{1/2}\left(\frac{W^k_\eps+\eps}{g(n^k_\eps)}\right)^{1/2}\na n_\eps^k
	\to \frac{\na(W^kn^k)-n^k\na W^k}{g(n^k)^{1/2}}=:h
$$
strongly in $L^2(\T^d)$. Let
$$
	\xi_\eps :=\bigg(\frac{W^k_\eps}{g(n^k_\eps)}\bigg)^{1/2}\na n_\eps^k.
$$
Then $(\xi_\eps)$ is bounded in $L^2(\Omega)$ and admits a weakly convergent 
subsequence, i.e.\ $\xi_\eps\rightharpoonup\xi$ for some $\xi\in L^2(\T^d)$. 
Similarly as in the proof of Proposition \ref{prop.conv}, i.e.\ with
an argument as in \eqref{auxabschaetzung_gut_fuer_spaeter}, we can find that, 
up to a subsequence, $(W^k_\eps+\eps)^{1/2}\xi_\eps\rightharpoonup h$ weakly in 
$L^2(\T^d)$ implying $(W^k)^{1/2}\xi=h$. In particular,
$$
	f_\eps:=(W^k)^{1/2}\xi_\eps\rightharpoonup(W^k)^{1/2}\xi = h
	\quad\mbox{ weakly in }L^2(\T^d).
$$
Since $|f_\eps(x)|\leq |h_\eps(x)|$ for a.e.\ $x\in\T^d$ and all $\eps>0$, we 
can apply Lemma \ref{lem.weak.strong} and obtain that, up to a subsequence, 
$f_\eps$ converges strongly in $L^2(\Omega)$. Thus,
$$
	I^3_\eps = U\int_{\T^d}f_\eps^2\phi_1 dx
	\to U\int_{\T^d}\frac{|\na(W^kn^k)-n^k\na W^k|^2}{g(n^k)}\phi_1 dx. 
$$
Hence, passing to the limit $\eps\to 0$ in \eqref{approx.eq.Wk.eps},
we infer that $(n^k,W^k)$ solves \eqref{1.appn}-\eqref{1.appw}.

\subsection{Energy estimate}

We claim that the total energy $\int_{\T^d}W_{\rm tot}^k dx$ is nondecreasing in $k$.
Let $(n^k_\eps,W^k_\eps)$ be a weak solution to \eqref{ex.epsn}-\eqref{ex.epsw}.
Then
\begin{align*}
  \dt\int_{\T^d} & \bigg(W^k_\eps-W^{k-1} 
	- \frac{U}{2}\big((n_\eps^k)^2-(n^{k-1})^2\big)\bigg)dx \\
  &\ge \dt\int_{\T^d}\big(W^k_\eps-W^{k-1} - U(n_\eps^k-n^{k-1})n_\eps^k\big)dx.
\end{align*}
Taking the test functions $\phi_1=U$ in \eqref{ex.epsw} and $\phi_0=n_\eps^k$
in \eqref{ex.epsn} and subtracting both equations, the above integral becomes
\begin{align*}
  \dt\int_{\T^d} & \bigg(W^k_\eps-W^{k-1} 
	- \frac{U}{2}\big((n_\eps^k)^2-(n^{k-1})^2\big)\bigg)dx \\
  &\ge -U\int_{\T^d}\frac{W_\eps^k}{g(n_\eps^k)}|\na n_\eps^k|^2 dx
	+ U\int_{\T^d}\frac{W_\eps^k+\eps}{g(n_\eps^k)}|\na n_\eps^k|^2 dx \ge 0.
\end{align*}
Thus, with the lower semi-continuity of the norm, we have
\begin{align*}
  \int_{\T^d}\bigg(W^{k-1}-\frac{U}{2}(n^{k-1})^2\bigg)dx
  &\le \liminf_{\eps\to 0}
	\int_{\T^d}\bigg(W_\eps^k-\frac{U}{2}(n^k_\eps)^2\bigg) dx \\
	&\le \int_{\T^d}\bigg(W^{k}-\frac{U}{2}(n^{k})^2\bigg)dx.
\end{align*}
In view of mass conservation $\int_{\T^d}n^kdx=\int_{\T^d}n^{k-1}dx$, 
it follows by Jensen's inequality that
\begin{align}
  \int_{\T^d}W^k dx 
  &\ge \int_{\T^d}W^{k-1}dx + \frac{U}{2}\int_{\T^d}\big((n^k)^2-(n^{k-1})^2\big)dx 
	\nonumber \\
  &\ge \int_{\T^d}\bigg(W^{k-1}-\frac{U}{2}(n^{k-1})^2\bigg)dx 
	+ \frac{U}{2} \bigg(\int_{\T^d}n^{k-1}dx\bigg)^2. \label{energy.est}
\end{align}
This shows the energy inequality in \eqref{ex.mono}.
Finally, assumption \eqref{ex.assump} gives $\int_{\T^d}W^k dx>0$ and 
consequently $W^k\not\equiv 0$. 

\subsection{An estimate for the variance}	

We claim that the total variance
$$
  V^k := \int_{\T^d}\bigg(W^k-\int_{\T^d}W^k dz\bigg)^2dx
	+ U\int_{\T^d}W^{k-1}dx\int_{\T^d}\bigg(n^k-\int_{\T^d}
	n^kdz\bigg)^2 dx
$$
is nonincreasing in $k$. For the proof, 
we observe that, taking the test function $\phi_1=1$ in the weak formulation
of \eqref{ex.epsw} and performing the limit $\eps\to 0$,
\begin{equation}\label{ex.monow}
  \int_{\T^d}W^k dx \le \int_{\T^d}W^{k-1}dx.
\end{equation}
Thus, by the energy estimate \eqref{energy.est}, 
\begin{align}
  \frac{U}{2}\int_{\T^d}(n^k)^2dx 
  &\le \frac{U}{2}\int_{\T^d}(n^{k-1})^2 dx
	- \int_{\T^d}(W^{k-1}-W^k)dx \nonumber \\
	&\le \frac{U}{2}\int_{\T^d}(n^{k-1})^2 dx. \label{ex.monon}
\end{align}
We employ \eqref{energy.est} again to find that
\begin{align*}
  \bigg(\int_{\T^d} & W^{k-1}dx\bigg)^2 - \bigg(\int_{\T^d}W^k dx\bigg)^2 \\
	&= \bigg(\int_{\T^d} W^{k-1}dx + \int_{\T^d} W^k dx\bigg)
	\bigg(\int_{\T^d} W^{k-1}dx - \int_{\T^d} W^k dx\bigg) \\
	&\le \bigg(\int_{\T^d} W^{k-1}dx + \int_{\T^d} W^k dx\bigg)
	\frac{U}{2}\bigg(\int_{\T^d}(n^{k-1})^2 dx - \int_{\T^d}(n^k)^2 dx\bigg).
\end{align*}
In view of \eqref{ex.monon}, the second bracket on the right-hand side is
nonnegative, such that \eqref{ex.monow} leads to
$$
  \bigg(\int_{\T^d} W^{k-1}dx\bigg)^2 - \bigg(\int_{\T^d}W^k dx\bigg)^2 
	\le U\int_{\T^d}W^{k-1}dx
	\bigg(\int_{\T^d}(n^{k-1})^2 dx - \int_{\T^d}(n^k)^2 dx\bigg).
$$
We take the test function $\phi_1=2\dt$ in \eqref{1.wtau}:
\begin{align*}
  0 &\ge 2\int_{\T^d}(W^k - W^{k-1})W^k dx 
	+ \dt\frac{2d-1}{d}\int_{\T^d}\frac{|\na W^k|^2}{g(n^k)}dx \\
	&\ge \int_{\T^d}(W^k)^2 dx - \int_{\T^d}(W^{k-1})^2 dx
	+ \dt\frac{2d-1}{d}\int_{\T^d}\frac{|\na W^k|^2}{g(n^k)}dx.
\end{align*}
Combining the previous two inequalities, we arrive at
\begin{align*}
  \int_{\T^d} (W^k)^2 dx &- \bigg(\int_{\T^d}W^{k} dx\bigg)^2
	+ \dt\frac{2d-1}{d}\int_{\T^d}\frac{|\na W^k|^2}{g(n^k)}dx \\
	&\le \int_{\T^d}(W^{k-1})^2 dx - \bigg(\int_{\T^d}W^{k-1}dx\bigg)^2 \\
	&\phantom{xx}{}+ U\int_{\T^d}W^{k-1}dx
  \bigg(\int_{\T^d}(n^{k-1})^2 dx - \int_{\T^d}(n^k)^2 dx\bigg).
\end{align*}
Since the measure of $\T^d$ is one, we have
$$
  \int_{\T^d} (W^k)^2 dx - \bigg(\int_{\T^d}W^{k} dx\bigg)^2
	= \int_{\T^d}\bigg((W^k)^2 - \bigg(\int_{\T^d}W^k dz\bigg)\bigg)^2 dx.
$$
Thus, taking into account mass conservation 
$\int_{\T^d}n^k dx=\int_{\T^d}n^{k-1}dx$ and $\int_{\T^d}W^{k-1}dx
\le\int_{\T^d}W^{k-2}dx$ (see \eqref{ex.monow}),
\begin{align*}
  \int_{\T^d} \bigg((W^k)^2 &- \bigg(\int_{\T^d}W^k dz\bigg)\bigg)^2 dx
	+ \dt\frac{2d-1}{d}\int_{\T^d}\frac{|\na W^k|^2}{g(n^k)}dx \\
	&\le \int_{\T^d}\bigg((W^{k-1})^2 - \bigg(\int_{\T^d}W^{k-1} dz\bigg)\bigg)^2 dx \\
	&\phantom{xx}{}+ U\int_{\T^d}W^{k-2}dx
	\int_{\T^d}\bigg((n^{k-1})^2 - \bigg(\int_{\T^d}n^{k-1} dz\bigg)\bigg)^2 dx \\
	&\phantom{xx}{}- U\int_{\T^d}W^{k-1}dx
	\int_{\T^d}\bigg((n^k)^2 - \bigg(\int_{\T^d}n^k dz\bigg)\bigg)^2 dx,
\end{align*}
and the claim follows after using the
lower-semicontinuity of the $L^2$-norm.

\subsection{Proof of Theorem \ref{thm.ex2}}

Let $d=1$. The second inequality in \eqref{ex.mono} implies that
$$
	\int_{\T^1}|\pa_x W^k|^2dx\leq \frac{G}{\dt}V^{k-1},
$$
where $G=\max_{\delta\le s\le\{n^{k-1}\|_{L^\infty(\T)}\}}g(s)
\ge\|g(n^k)\|_{L^\infty(\T)}$.
By the mean-value theorem, there exists $x_0\in\T$ such that
$$
  W^k(x) = W^k(x_0) + \int^x_{x_0}\pa_x W^k(z)dz 
	\ge \int_{\T}W^k(z)dz - \int_{\T}|\pa_x W^k|dz.
$$
Then, using Jensen's inequality and the energy estimate in \eqref{ex.mono},
\begin{align*}
  W^k(x) &\ge \int_\T W^kdx - \int_\T|\pa_x W^k|^2 dx \\
	&\ge \int_\T W^{k-1}dx - \frac{U}{2}\int_\T\bigg(n^{k-1}-\int_\T n^{k-1}dz\bigg)^2dx
	- \frac{G}{\dt}V^{k-1}.
\end{align*}
By definition \eqref{var} of $V^k$, the right-hand side is positive if 
\begin{align*}
  \overline{W^{k-1}}
	> \frac{G}{\dt}\int_\T(W^{k-1}-\overline{W^{k-1}})^2 dx
	+ U\bigg(\frac{G}{\dt}\overline{W^{k-2}} + \frac12\bigg)
	\int_\T(n^{k-1}-\overline{n^{k-1}})^2 dx,
\end{align*}
which is our assumption. Since $W^{k}\in H^1(\T)\hookrightarrow C^0([0,1])$,
we conclude that $W^k>0$ in $[0,1]$. Then we can use $\phi_1=\phi/W^k$ as
a test function in \eqref{1.wtau} and obtain the standard weak formulation
of \eqref{1.wtau} for test functions $\phi\in H^1(\T)$. Furthermore,
for $\phi\in H^1(\T)$,
\begin{align*}
  \int_\T n^k\pa_x\phi dx
	&= \int_\T n^k\pa_x\bigg(W^k\pa_x\bigg(\frac{\phi}{W^k}\bigg)
	+ \pa_x W^k\frac{\phi}{W^k}\bigg)dx \\
	&= -\int_\T\pa_x(n^kW^k)\frac{\phi}{W^k}dx + \int_\T n^k\pa_xW^k\frac{\phi}{W^k}dx,
\end{align*}
showing that $n^k\in H^1(\T)$ and finishing the proof.


\section{Numerical simulations}\label{sec.num}

We solve the one-dimensional equations \eqref{1.n} and \eqref{1.wtot} 
on the torus in conservative form, i.e.\ for
the variables $n$ and $W_{\rm tot}$. The equations are discretized
by the implicit Euler method and solved in a semi-implicit way:
\begin{align}
  \frac{1}{\dt}(n^k-n^{k-1}) 
  &= \pa_x\bigg(\frac{W^{k-1}\pa_x n^k}{n^{k-1}(1-\eta n^{k-1})}
	\bigg), \label{num.n} \\
	\frac{1}{\dt}(W^k_{\rm tot}-W^{k-1}_{\rm tot}) &= \pa_x\bigg(
	\frac{\pa_x W^k}{2n^k(1-\eta n^k)} + \frac{UW^k}{1-\eta n^k}\pa_x n^k\bigg),
	\label{num.w}
\end{align}
where $W^k_{\rm tot}=W^k-(U/2)(n^k)^2$, $x\in\T=(0,1)$.
The spatial derivatives are discretized by centered finite differences
with constant space step $\triangle x>0$.
For given $(W^{k-1},n^{k-1})$, the first equation \eqref{num.n} is solved for $n^k$. 
This solution is employed in the second equation \eqref{num.w}
which is solved for $W^k_{\rm tot}$. Finally, we define
$W^k=W^k_{\rm tot}+(U/2)(n^k)^2$. We choose the parameters $U=10$, $\eta=1$,
$\dt=10^{-5}$, and $\triangle x=10^{-2}$. The initial energy $W^0$ is constant
and the initial density equals
$$
  n^0(x) = \left\{\begin{array}{ll}
	3/4 &\quad\mbox{for }1/4\le x\le 3/4, \\
	1/4 &\quad\mbox{else},
	\end{array}\right. \quad x\in[0,1].
$$

\begin{figure}[ht]
\centering
\includegraphics[width=140mm]{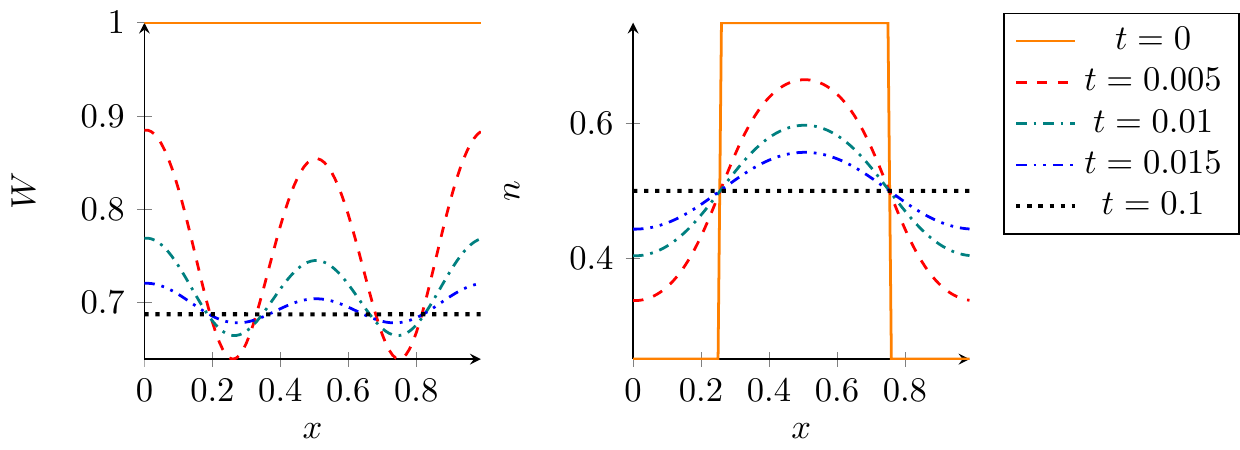}
\caption{Evolution of $(n,W)$ with initial energy $W^0=1$.}
\label{fig.evol1}
\end{figure}

\begin{figure}[ht]
	\centering
	\includegraphics[width=140mm]{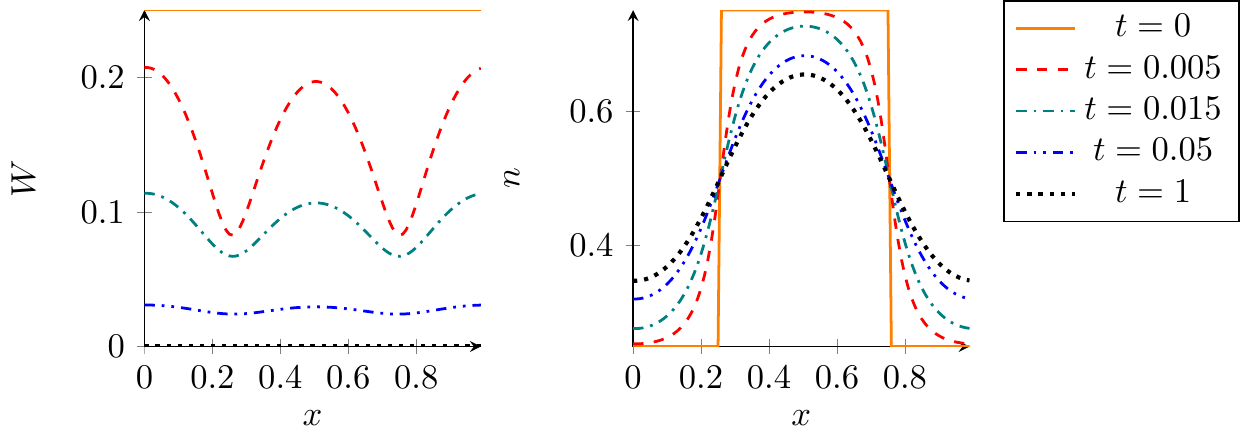}
	\caption{Evolution of $(n,W)$ with initial energy $W^0=1/4$.}
	\label{fig.evol2}
\end{figure}

The time evolution of the particle density and energy is shown in
Figure \ref{fig.evol1} for initial energy $W^0=1$.
The variables converge to the constant steady state 
$(n^\infty,W^\infty)$ as $t\to\infty$, which is almost reached after time $t=0.1$.
Since equations \eqref{num.n}-\eqref{num.w} are conservative,
the total particle number $\int_0^1 n(x,t)dx$ and the total energy
$\int_0^1 W_{\rm tot}(x,t)dx$ are constant in time. Consequently,
the values for the steady state can be computed explicitly. We obtain
for $W^0=1$,
\begin{align*}
  n^\infty &= \int_0^1 n^0(x)dx = \frac12, \\
	W^\infty &= W^\infty_{\rm tot} + \frac{U}{2}(n^\infty)^2
	= \int_0^1\bigg(W^0(x)-\frac{U}{2}n^0(x)^2\bigg)dx + \frac{U}{2}(n^\infty)^2
	= \frac{11}{16}.
\end{align*}
The energy stays positive for all times, so the
high-temperature equations are strictly parabolic, and the convergence
to the (constant) steady state is quite natural. 

The situation is different
in Figure \ref{fig.evol2}, where the particle density converges
to a {\em nonconstant} steady state $n^\infty$ (we have chosen $W^0=1/4$). 
This can be understood as follows. By contradiction, 
let both the particle density and energy be converging
to a constant steady state. Then $n^\infty=1/2$ (see the above calculation)
and
$$
  W^\infty = \int_0^1\bigg(W^0(x)-\frac{U}{2}n^0(x)^2\bigg)dx + \frac{U}{2}(n^\infty)^2
	= -\frac{1}{16}.
$$
However, this contradicts the fact that the energy $W$ is nonnegative
which follows from the maximum principle. Therefore, it is plausible that either $n$
or $W$ cannot converge to a constant. If $n^\infty$ is not constant,
$W^\infty\na n^\infty$ is constant only if $W^\infty=0$. Thus, it is reasonable that
the energy converges to zero, while $n^\infty$ is not constant.
One may say that there is not sufficient initial ``reverted'' energy to
level the particle density.

\begin{figure}[ht]
\centering
\includegraphics[width=140mm]{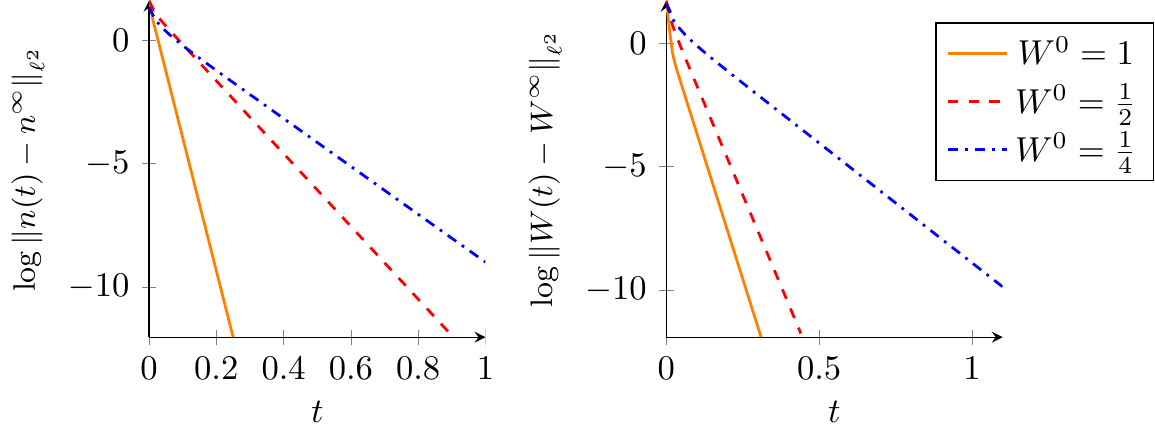}
\caption{Decay rates for various initial energies.}
\label{fig.decay}
\end{figure}

Another difference between Figure \ref{fig.evol1} and Figure \ref{fig.evol2} 
is the time scale. For larger initial energies, the convergence to
equilibrium is faster. In fact, Figure \ref{fig.decay} shows that the
decay of the $\ell^2$ norm of $n(t)-n^\infty$ and $W(t)-W^\infty$
is exponential. Here, we have chosen the initial particle density
$n^0(x)=\frac14$ for $0\le x<\frac12$, $n^0(x)=\frac34$ for $\frac12\le x<1$,
and the initial energy $W^0\in\{\frac14,\frac12,1\}$. 
For $W^0\in\{1,\frac12\}$, we have $n^\infty=\frac12$
and $W^\infty=\max\{0,W^0-U/32\}$.
For $W^0=\frac14$, it holds that $W^\infty=0$ and we have set $n^\infty(x)=n(x,2)$.

Finally, we compute the numerical convergence rates for different
space and time step sizes $\triangle x$ and $\dt$, respectively. Since there is
no explicit solution available, we choose as reference solution the
solution to \eqref{num.n}-\eqref{num.w} with $\triangle x=1/1680$ (for the computation
of the spatial $\ell^2_x$ error) and $\dt=1/5040$ (for the computation of the
$\ell^2_t\ell^2_x$ error). Figure \ref{fig.conv1} shows that the temporal
error is linear in $\dt$, and Figure \ref{fig.conv2}
indicates that the spatial error is quadratic in $\triangle x$.
These values are expected in view of our finite-difference discretization and they
confirm the validity of the numerical scheme.

\begin{figure}[ht]
\centering
\includegraphics[width=110mm]{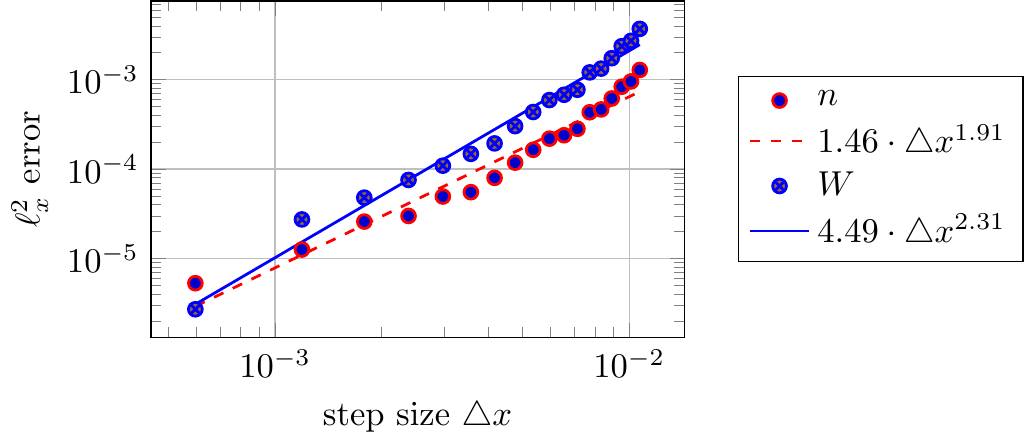}
\caption{Numerical convergence in time.}
\label{fig.conv1}
\end{figure}

\begin{figure}[ht]
\centering
\includegraphics[width=110mm]{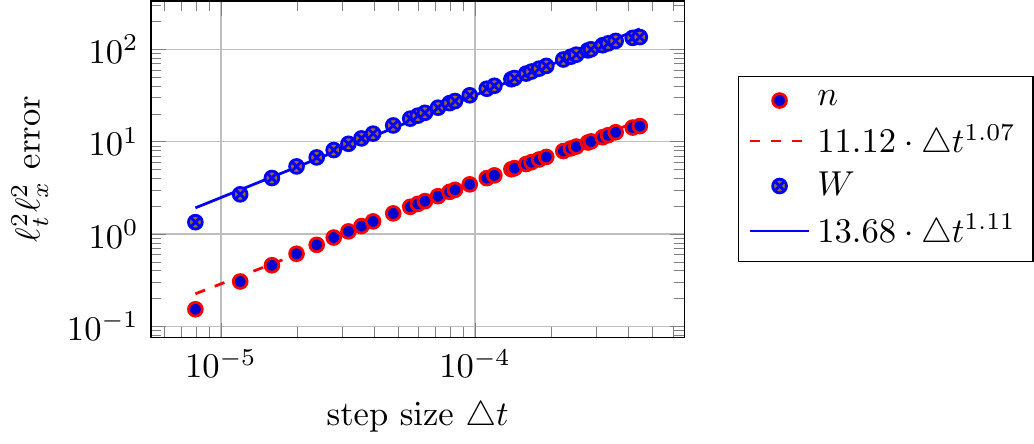}
\caption{Numerical convergence in space.}
\label{fig.conv2}
\end{figure}


\begin{appendix}
\section{Calculation of some integrals}

We recall that $\eps(p)=-2\eps_0\sum_{k=1}^d\cos(2\pi p_i)$. Then
$u_i(p)=(\pa\eps/\pa p_i)(p)=4\pi\eps_0\sin(2\pi p_i)$, and we calculate
\begin{align}
  \int_{\T^2}\eps^2 dp &= 4\eps_0^2\int_0^1\sum_{k=1}^d\cos^2(2\pi p_k)dp_k
	= 2d\eps_0^2, \label{2.eps2} \\
  \int_{\T^d} u_iu_j dp &= (4\pi\eps_0)^2\delta_{ij}\int_0^1\sin^2(2\pi p_i) dp_i
	= \frac12(4\pi\eps_0)^2\delta_{ij}, \label{2.uu} \\
  \int_{\T^d} \eps u_iu_j dp &= -2\eps_0(4\pi\eps_0)^2\sum_{k=1}^d\int_{\T^d}
	\cos(2\pi p_k)\sin(2\pi p_i)\sin(2\pi p_j)dp \label{2.epsuu} \\
	&= -2\eps_0(4\pi\eps_0)^2\delta_{ij}\int_0^1\cos(2\pi p_i)\sin^2(2\pi p_i)dp_i = 0, 
	\nonumber \\
	\int_{\T^d} \eps^2 u_iu_j dp &= \frac13\int_{\T^d}\frac{\pa}{\pa p_i}(\eps^3)
	\frac{\pa\eps}{\pa p_j}dp 
	= -\frac13\int_{\T^d}\eps^3\frac{\pa^2\eps}{\pa p_i\pa p_j}dp = 0
	\quad\mbox{if }i\neq j, \nonumber
\end{align}
since $\pa^2\eps/\pa p_i\pa p_j=0$ for $i\neq j$. 
We compute the integral $\int_{\T^d}\eps^2 u_i^2dp$.
First, let $d=1$. Then
$$
  \int_{\T} \eps^2 u_1^2 dp_1 
	= 4\eps_0^2(4\pi\eps_0)^2\int_0^1\cos^2(2\pi p_1)\sin^2(2\pi p_1)dp_1 
	= \frac{\eps_0^2}{2}(4\pi\eps_0)^2 = 8\pi^2\eps_0^4.
$$
Furthermore, for $d>1$, 
\begin{align*}
  \int_{\T^d} & \eps^2 u_i^2 dp 
	= 4\eps_0^2(4\pi\eps_0)^2\int_{\T^d}\bigg(\sum_{k=1,\,k\neq i}^d
	\cos(2\pi p_k) + \cos(2\pi p_i)\bigg)^2\sin^2(2\pi p_i)dp \\
	&= 4\eps_0^2(4\pi\eps_0)^2\left(\int_{\T^{d}}\bigg(\sum_{k=1,\,k\neq i}^d
	\cos(2\pi p_k)\bigg)^2\sin^2(2\pi p_i)dp
	+ \int_\T \cos^2(2\pi p_i)\sin^2(2\pi p_i)dp_i\right) \\
	&= 4\eps_0^2(4\pi\eps_0)^2 \int_{T^{d}}
	\sum_{k=1,\,k\neq i}^d\cos^2(2\pi p_k)\sin^2(2\pi p_i)dp
	+ 8\pi^2\eps_0^4 \\
	&= 4\eps_0^2(4\pi\eps_0)^2\sum_{k=1,\,k\neq i}^d\int_0^1\cos^2(2\pi p_k)dp_k
	\int_0^1\sin^2(2\pi p_i)dp_i + 8\pi^2\eps_0^4 \\
	&= (d-1)\eps_0^2(4\pi\eps_0)^2 + 8\pi^2\eps_0^4 = 8(2d-1)\pi^2\eps_0^4.
\end{align*}
We conclude that
\begin{equation}\label{2.eps2uu}
  \int_{\T^d}\eps^2 u_iu_j dp = 8(2d-1)\pi^2\eps_0^4\delta_{ij}.
\end{equation}

\end{appendix}


\end{document}